\newcommand{\address}[1]{\gdef\@address{#1}}
\newcommand{\email}[1]{\gdef\@email{\url{#1}}}
\newcommand{\@endstuff}{\par\vspace{\baselineskip}\noindent\small
\begin{tabular}{@{}l}\scshape\@address\\\textit{E-mail address:} \@email\end{tabular}}
\address{Department of Mathematics, Rice University}
\email{yandi.wu@rice.edu}
\theoremstyle{plain}
\newtheorem{theorem}{Theorem}[section]
\newtheorem{lemma}[theorem]{Lemma}
\newtheorem{question}[theorem]{Question}
\newtheorem{proposition}[theorem]{Proposition}
\theoremstyle{definition}
\newtheorem{definition}[theorem]{Definition}
\newtheorem{remark}[theorem]{Remark}
\newtheorem{construction}[theorem]{Construction}
\newtheorem{algorithm}[theorem]{Algorithm}
\begin{document}

\title{Iso-length-spectral Hyperbolic Surface Amalgams} 
\date{}
\author{Yandi Wu} 

\maketitle 

\begin{abstract}
    Two negatively curved metric spaces are \textit{iso-length-spectral} if they have the same multisets of lengths of closed geodesics. A well-known paper by Sunada provides a systematic way of constructing iso-length-spectral surfaces that are not isometric. In this paper, we construct examples of iso-length-spectral \textit{surface amalgams} that are not isometric, generalizing Buser's combinatorial construction of Sunada's surfaces. We find both homeomorphic and non-homeomorphic pairs. Finally, we construct a noncommensurable pair with the same \textit{weak length spectrum}, the length set without multiplicity.
\end{abstract}

\section{Introduction}

Recall that the \textit{(unmarked) length spectrum} of a metric space $(X, g)$ is the ascending multiset of positive real numbers representing lengths of closed geodesics in $(X, g)$. We say that two metric spaces are \emph{iso-length-spectral} if they have the same length spectra.  

The question of whether there exist iso-length-spectral manifolds that are not isometric has a long history. The first examples of such objects are due to Milnor (see \cite{milnor}), who constructed two flat, 16-dimensional non-isometric, iso-length-spectral tori. In \cite{vigneras}, Vigner\'as constructed examples of hyperbolic surfaces that are iso-length-spectral but not isometric. Both Vigner\'as's and Milnor's constructions are heavily number theoretic and, one could argue, difficult to describe geometrically. In \cite{sunada}, Sunada developed a celebrated, systematic, and more geometric way of constructing hyperbolic iso-length-spectral surfaces. In \cite{buser1} (and \cite{buser2} for the genus $6$ case), Buser uses \cite{sunada} to construct pairs of iso-length-spectral, non-isometric surfaces of genus $\geq 5$. 

Due to the combinatorial nature of Buser's approach, one can apply his construction to metric spaces outside the setting of Riemannian manifolds. We explore such objects in this paper, and show that in contrast to the hyperbolic surface case, Buser's interpretation of the Sunada construction sometimes yields examples of \textit{non-homeomorphic} iso-length-spectral objects. 

Let $(X, g)$ be a simple, thick hyperbolic surface amalgam, which, roughly speaking, is constructed by isometrically gluing together compact, hyperbolic surfaces with boundary together along their boundary components. We refer the reader to Definition 2.3 of \cite{lafont} for a more precise definition; note that in his paper, he refers to surface amalgams as ``2-dimensional P-manifolds". We now state our first result: 

\begin{theorem}\label{main}
    There exist iso-length-spectral surface amalgams equipped with piecewise hyperbolic metrics that are \emph{not} isometric. Furthermore, there exist non-homeomorphic iso-length-spectral pairs of hyperbolic surface amalgams. 
\end{theorem}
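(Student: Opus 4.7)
The plan is to adapt Buser's combinatorial realization of the Sunada construction to the setting of surface amalgams, using the extra flexibility provided by multi-surface gluings along singular circles to manufacture examples that are not just non-isometric but in some cases not even homeomorphic. I would begin by fixing a Sunada triple $(G, H_1, H_2)$: a finite group $G$ together with two almost-conjugate (Gassmann-equivalent) but non-conjugate subgroups $H_1, H_2$, satisfying $|H_1 \cap C| = |H_2 \cap C|$ for every conjugacy class $C$ of $G$. Buser's worked examples, for which $G$ is small and the generating data is explicit, are ideal candidates to import directly.

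Next I would build a ``base'' simple thick hyperbolic surface amalgam $X_0$ equipped with a surjection $\pi_1(X_0) \twoheadrightarrow G$. Because $\pi_1$ of a surface amalgam is built by amalgamating the fundamental groups of the surface pieces over their boundary-circle subgroups, one has substantial freedom in arranging such a surjection by choosing the genera and number of boundary components of the pieces. Pulling back the regular $G$-cover $\widehat{X} \to X_0$ determined by the surjection, I would form the intermediate covers $X_i := \widehat{X}/H_i$ of degree $[G:H_i]$. These are again simple thick hyperbolic surface amalgams, since connected covers of amalgams along their singular loci decompose into surfaces with boundary glued along lifts of the singular circles; the resulting combinatorial structure on each $X_i$ is read off directly from the action of $G$ on $G/H_i$, exactly as in Buser's polygon-gluing recipe.

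For the non-isometric claim, I would verify the Sunada counting argument in this setting: closed geodesics of $X_i$ correspond to $H_i$-conjugacy classes of hyperbolic elements in the deck group of $\widehat{X} \to X_i$, and the Gassmann condition yields a length-preserving bijection between the closed-geodesic multisets of $X_1$ and $X_2$. To rule out isometry, I would pick $X_0$ so that the two quotients carry combinatorially distinct singular graphs, or so that the total Euler characteristics of their surface pieces differ; either of these is an isometry invariant and immediately separates $X_1$ from $X_2$.

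The non-homeomorphic half of the theorem is the genuinely new feature of the amalgam setting and will be the main obstacle, since in the classical closed-surface case $X_1$ and $X_2$ always share the same genus and are therefore homeomorphic. Here I would exploit the fact that the \emph{branching graph} of a surface amalgam---whose vertices are the surface pieces and whose edges record singular-circle gluings, decorated by the local number of incident surfaces---is a homeomorphism invariant. By choosing the labelings of boundary identifications in $X_0$ (equivalently, the generators of $G$ used to describe the $G$-action on boundary components of $\widehat{X}$) so that the orbit structures of $H_1 \backslash G$ and $H_2 \backslash G$ on the resulting edge sets disagree, I would produce $X_1$ and $X_2$ with non-isomorphic branching graphs, forcing non-homeomorphic underlying spaces. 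The hard part will be to exhibit an explicit such pair, rather than argue only in principle, by tracking the two orbit decompositions through a specific Sunada triple and base amalgam and checking directly that the resulting branching graphs differ.
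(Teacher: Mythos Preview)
Your proposal is a coherent alternative strategy, but it is genuinely different from what the paper does, and it is worth being precise about where the two diverge.

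The paper does \emph{not} build a base amalgam $X_0$ and take Sunada covers. Instead it starts from Buser's already iso-length-spectral genus-5 surfaces $S_1,S_2$ and forms $X_1,X_2$ by \emph{identifying} certain closed geodesics in each $S_i$ to a single singular circle (first a pair of length-$2a$ curves, then the four systoles). These amalgams are not finite covers of any common base; they are self-gluings of $S_1$ and $S_2$. Consequently the paper cannot invoke the Gassmann conjugacy-class count at all. Iso-length-spectrality is instead proved by a direct combinatorial argument: each closed geodesic in $X_1$ is cut into arcs $\beta_i$ between hits of the singular locus, and one counts how many ways the translated/transplanted copies of the $\beta_i$ can be reassembled into closed geodesics in $X_1$ versus $X_2$, showing these counts agree. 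Non-homeomorphism in the second example comes from Lafont's chamber criterion: cutting along the gluing curve yields two chambers for $X_1$ and one for $X_2$.

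Your covering-space route should work, and for the iso-length-spectral step it is arguably cleaner, since once you check that a piecewise-hyperbolic surface amalgam is locally CAT$(-1)$ (so free homotopy classes are represented by unique closed geodesics), Sunada's lemma applies verbatim. Two points to tighten, though. First, your suggestion to separate $X_1$ from $X_2$ by ``total Euler characteristics of their surface pieces'' cannot work as stated for same-degree covers; you need the \emph{multiset} of chamber Euler characteristics, or the chamber count, to differ. Second, the substantive obstacle you flag---actually exhibiting a base amalgam and Sunada triple for which the $H_1$- and $H_2$-orbit decompositions of the chamber/edge data disagree---is exactly where the work lies, and you have not yet produced one. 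The paper sidesteps this by never needing a base amalgam: its non-homeomorphic pair is inherited directly from the fact that cutting $S_1$ and $S_2$ along their systoles already gives different numbers of components. What the paper's approach buys is immediate, fully explicit examples with no covering-space bookkeeping; what yours would buy is a more conceptual, reusable template, at the cost of having to manufacture and verify a concrete instance.
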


We remark that in contrast to the surface amalgam case, there cannot exist non-homeomorphic iso-length-spectral pairs of hyperbolic closed surfaces, as the length spectrum completely determines the genus of a surface (see \cite{chavel}). In the setting of $3$-manifolds, on the other hand, applications of the Sunada method (e.g. \cite{spatzier}, \cite{reid}, \cite{mcreynolds}) have been shown to yield hyperbolic, iso-length-spectral 3-manifolds which are non-isometric and thus non-homeomorphic due to Mostow Rigidity. This provides further evidence towards the fact that surface amalgams share characteristics of both surfaces and 3-manifolds (see also \cite{hst}). 

We say that two metric spaces are \textit{metrically commensurable} if they share some isometric finite-sheeted cover. Notably, both main sources of iso-length-spectral, non-isometric hyperbolic manifolds (from \cite{vigneras} and \cite{sunada}) yield metrically commensurable manifolds. This led Reid to pose an interesting open question in \cite{reid}: 

\begin{question}[Reid]\label{question:reid} Do there exist two iso-length-spectral hyperbolic manifolds which are not metrically commensurable?
\end{question}

It is known that all compact iso-length-spectral \textit{arithmetic} surfaces and 3-manifolds are necessarily metrically commensurable (see \cite{reid} and \cite{chlr}). In \cite{lsv}, in contrast, the authors find large families of locally symmetric, iso-length-spectral manifolds of higher rank that are not metrically commensurable. However, the question of whether there exist iso-length-spectral, metrically non-commensurable hyperbolic surfaces is still open. 

We say two topological spaces are \textit{topologically commensurable} if they share \textit{homeomorphic} (as opposed to isometric) finite-sheeted covers. Note that if two metric spaces are not topologically commensurable, they are automatically not metrically commensurable. For the remainder of the paper, when we say ``commensurable," we mean topologically commensurable. 

We follow the terminology from \cite{GR} and define the \textit{weak length spectrum} of a locally CAT($-1$) metric space to be a collection of lengths of closed geodesics \textit{without} multiplicity. If two metric spaces have the same weak length spectrum, we say they are \textit{weak length isospectral}. Notice that since the weak length spectrum is a subset of the length spectrum, weak length isospectrality is a weaker condition than length isospectrality. 

In fact, in \cite{lmnr}, the authors show that examples of weak length isospectral (called ``length equivalent" in their paper) hyperbolic manifolds that are not iso-length-spectral exist in great abundance. The examples they construct arise from sequences of manifolds in towers of covers so that the weak length isospectral pairs have different volumes. In fact, the ratios between volumes of consecutive terms $M_{n + 1}$ and $M_n$ in the sequences tend to infinity.  

We are now ready to state the second main result of the paper:  

\begin{theorem}\label{theorem:commensurable}
    There exist weak length isospectral surface amalgams equipped with piecewise hyperbolic metrics that are \emph{not} (topologically) commensurable. 
\end{theorem}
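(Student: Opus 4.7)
The plan is to exhibit two surface amalgams that are weak length isospectral but share no homeomorphic finite cover. Since iso-length-spectrality trivially implies weak length isospectrality, the non-homeomorphic iso-length-spectral pairs furnished by Theorem~\ref{main} are natural candidates, so the real content of Theorem~\ref{theorem:commensurable} is to arrange that such a pair is not topologically commensurable.

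The key invariant I would exploit is the \emph{valence data} at the singular curves. At each singular curve $c$ of a simple thick surface amalgam $X$, a regular neighborhood of $c$ fibers over $c$ with fiber the cone on a finite discrete set of cardinality $v(c) \geq 3$, the valence of $c$. Any topological finite cover $\widetilde{X} \to X$ is a local homeomorphism, so it pulls $c$ back to a disjoint union of singular curves of $\widetilde{X}$ each of which has the same valence $v(c)$. Consequently the \emph{set} of valences appearing at singular curves is an invariant of the topological commensurability class. It therefore suffices to exhibit an iso-length-spectral (or even just weak length isospectral) pair $(X_1, X_2)$ whose valence sets differ; for instance, $X_1$ with only valence-$3$ singular curves and $X_2$ containing a valence-$4$ singular curve.

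To produce such a pair I would modify the Buser--Sunada input underlying Theorem~\ref{main} so that the combinatorial gluing data on the two sides carry different vertex-degree profiles, then tune the hyperbolic structures on the surface pieces so that the two resulting amalgams have the same weak length spectrum. A natural approach is to start from an iso-length-spectral pair $(Y_1, Y_2)$ from Theorem~\ref{main}, then attach an additional surface piece along a singular curve of $Y_2$ (raising its valence) while performing a compensating modification on the $Y_1$-side designed to preserve the set of closed-geodesic lengths rather than the full multiset.

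The main obstacle is precisely this preservation step: altering the combinatorial structure introduces new closed geodesics, both internal to the attached piece and traversing singular curves along new routes, and each of their lengths must either already belong to the common weak spectrum or be matched by a corresponding length on the other side. Because the weak spectrum discards multiplicities, this matching is considerably more permissive than full iso-length-spectrality. I would attempt to realize it using the Fenchel--Nielsen flexibility of hyperbolic structures on the surface pieces with prescribed boundary lengths, choosing the attached piece's moduli so that its contribution to the length set is contained in the original weak spectrum, and using the symmetry of the Buser--Sunada construction to control the lengths of geodesics that cross the modified singular curve.
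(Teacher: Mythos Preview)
Your proposal has a genuine gap at exactly the point you flag as ``the main obstacle.'' Attaching an extra chamber to one side introduces infinitely many new closed-geodesic lengths (those entirely in the new piece, and those crossing the modified singular curve), and you offer no mechanism for forcing all of these into the weak spectrum of the other side. ``Fenchel--Nielsen flexibility'' gives you only finitely many real parameters on the attached piece, so a dimension count alone makes it implausible that you can absorb a countably infinite family of length conditions; at minimum this would require a serious argument, not a tuning step. Moreover, the asymmetric modification destroys the Buser--Sunada transplantation structure that was the only tool you had for comparing length spectra in the first place. Note also that the non-homeomorphic pair from Theorem~\ref{main} is in fact commensurable (they share a common double cover), so that pair cannot serve as your starting point without modification.

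The paper's route is essentially opposite to yours. Rather than breaking the valence symmetry, it keeps the combinatorics on both sides identical---gluing three copies of $S_1$ (resp.\ $S_2$) along a common curve---so that Buser's transplantation still applies segment-by-segment and yields weak length isospectrality (the paper checks explicitly that transplantation here preserves the length \emph{set} but not multiplicities). Since the valence data are then the same on both sides, your proposed invariant would not separate the pair; instead the paper invokes a finer commensurability invariant, the vector of Euler-characteristic ratios of chambers due to Stark and Dani--Stark--Thomas. Passing to suitable six-sheeted covers and computing these ratios shows they disagree, hence $\pi_1(X_1)$ and $\pi_1(X_2)$ are not abstractly commensurable. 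The moral is that the right strategy is to preserve enough symmetry for transplantation to survive, and then use a commensurability invariant subtle enough to see past that symmetry.
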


We remark that it is impossible to find pairs of surfaces that are not topologically commensurable, as all closed surfaces are commensurable as topological objects. The proof of \Cref{theorem:commensurable} relies on work from \cite{stark} and \cite{DST} which is related to the abstract commensurability classification problem of right-angled Coxeter groups. Furthermore, in contrast to the examples from \cite{lmnr}, the weak length isospectral pairs from Theorem \ref{theorem:commensurable} have the same volume. \\

\noindent \textbf{Outline of the paper.} We now give a brief outline of the paper. We begin with a brief review of Buser's techniques for constructing iso-length-spectral, non-isometric surfaces in Section 2. We continue with a construction of pairs of homeomorphic, iso-length-spectral, non-isometric surface amalgams, followed by a construction of a pair that is not homeomorphic using Lafont's criteria from \cite{lafont} in Section 3. Instead of using Buser's transplantation technique, we count copies of ``identical" closed geodesics. Finally, in Section 4, we construct a pair of weak length isospectral surfaces which are not commensurable using criteria from \cite{DST}. This time, we prove weak length isospectrality using Buser's transplantation technique.\\

\noindent
\textbf{Acknowledgements.} I am deeply indebted to Chris Leininger for suggesting the project, discussing it in depth with me, and pointing out gaps in previous drafts. I also thank Alan Reid for discussions that inspired \Cref{theorem:commensurable}, and for pointing out various helpful references. Finally, I thank my doctoral advisors Tullia Dymarz and Caglar Uyanik for helpful discussions and moral support. I also thank the referee for a careful reading of the paper, and for suggestions on improving exposition. This work was partially supported by an NSF RTG grant DMS-2230900.

\section{Buser's Techniques}
\label{Buser}
We rely heavily on the techniques from \cite{buser1}, which we briefly sketch.

\subsection{Buser's original construction}
Buser constructs two iso-length-spectral but non-isometric hyperbolic genus 5 surfaces by gluing together identical copies of right-angled octagons, which he calls \textit{building blocks}. The gluing scheme is shown below in \Cref{fig:s1}. His construction is modeled off Sunada's construction using almost conjugate subgroups of $(\mathbb{Z}/8\mathbb{Z})^{\times} \ltimes (\mathbb{Z}/8\mathbb{Z})^{+}$ given by Gerst in \cite{gerst} (see Example 1 of Section 1 of \cite{sunada}). 

\begin{figure}[H]
    \centering
    \includegraphics[width=\textwidth]{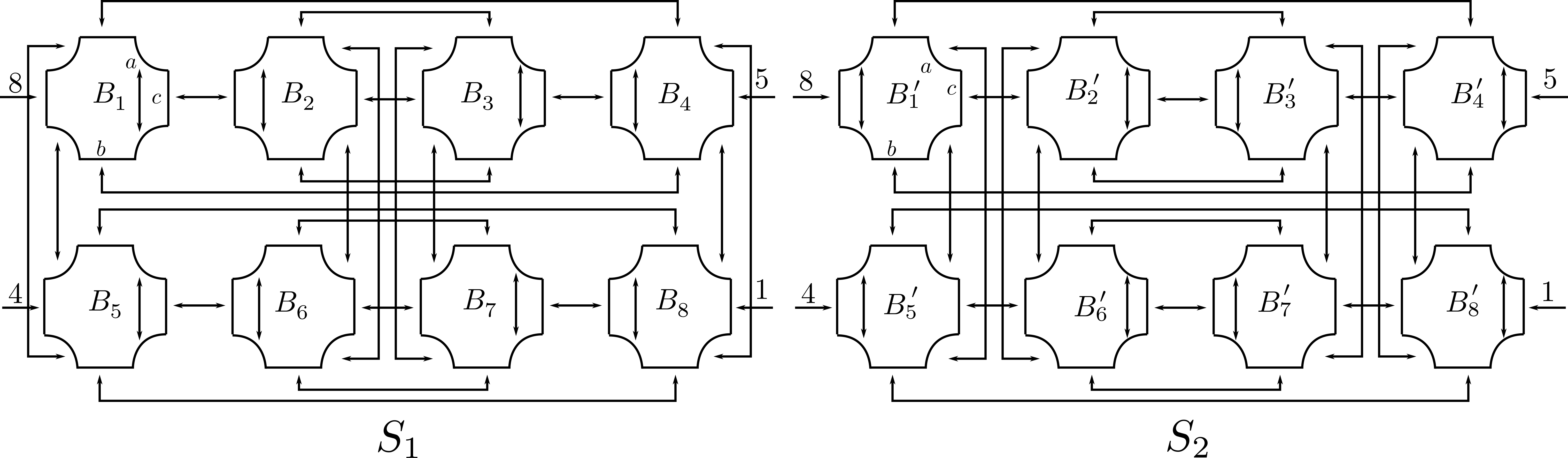}
    \caption{Gluing schemes for iso-length-spectral, non-isometric a genus 5 surfaces from \cite{buser1}.}
    \label{fig:s1}
\end{figure}

We now briefly summarize the idea behind showing Buser's genus $5$ surfaces are iso-length-spectral but not isometric. We do not detail the higher genus cases, but the ideas are similar with variations in how the building blocks are constructed. \\

\noindent \textbf{Buser's surfaces are not isometric.} The building blocks $B_i$ ($1 \leq i \leq 8$) have three sets of identical edges with lengths $a$, $b$ and $c$ (see \Cref{fig:s1}). By adding an extra restriction on the lengths $b$ and $c$, Buser is able to conclude the systoles of $S_1$ and $S_2$ are exactly geodesics of length $c$:

\begin{lemma}[Lemma 3.3, Proposition 3.4 of \cite{buser1}] Let $0 < c < b < 1$. Then any geodesic curve $\delta$ of $B_i$ which connects two sides of $B_i$ has length $\ell(\delta) \geq c$. Equality holds only if $\delta$ is a side (of length $c$) of a building block. 
\end{lemma}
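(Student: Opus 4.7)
The plan is to apply right-angled hyperbolic polygon trigonometry to compare arbitrary geodesic arcs in $B_i$ with the $c$-edges. First, I would observe that each $c$-side of $B_i$ is itself a geodesic arc of length exactly $c$ whose endpoints lie on the two adjacent sides of the octagon (at vertices); such arcs realize the bound and provide the equality case claimed by the lemma.

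For any other geodesic arc $\delta$ joining two distinct sides $s_1, s_2$ of $B_i$, I would use the standard fact that the shortest geodesic arc between two disjoint geodesic lines of $\mathbb{H}^2$ is their unique common perpendicular. Thus when $s_1$ and $s_2$ are non-adjacent sides of the octagon, $\ell(\delta)$ is bounded below by the length of the common perpendicular $\gamma$ between the geodesic lines extending $s_1$ and $s_2$, reducing the task to bounding $\ell(\gamma) \geq c$. Cutting $B_i$ along $\gamma$ yields two smaller right-angled hyperbolic polygons whose side lengths are determined by $a$, $b$, $c$ together with the positions of the endpoints of $\gamma$; applying standard formulas for right-angled hexagons and pentagons (the cosine-of-sides identities) expresses $\ell(\gamma)$ as a monotone function of these data, and the hypothesis $c < b < 1$ is precisely what should be needed to force $\ell(\gamma) > c$ strictly.

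The main obstacle is twofold. First, by the dihedral symmetry of $B_i$ there are several orbits of pairs of sides to enumerate, each requiring its own trigonometric estimate. Second, the case where $s_1$ and $s_2$ are adjacent (meeting at a right-angle vertex, so their ``common perpendicular'' degenerates to the shared vertex) requires a different argument; I would try to show that any genuine geodesic arc between adjacent sides, other than the degenerate vertex-arc, must effectively cross a $c$-edge of the octagon, forcing $\ell(\delta) \geq c$. Pinning down precisely why the upper bound $b < 1$ (rather than merely $c < b$) is needed in the trigonometric estimates is the technically delicate point I expect to wrestle with, since $b < 1$ seems designed to keep $\sinh(b)$ and $\cosh(b)$ in a regime where the relevant inequalities go in the correct direction.
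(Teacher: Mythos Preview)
The paper does not supply its own proof of this lemma: it is quoted verbatim as Lemma~3.3 and Proposition~3.4 of \cite{buser1} and used as a black box, so there is nothing in the present paper to compare your argument against. Your outline is broadly in the spirit of Buser's original argument, which proceeds by hyperbolic trigonometry on the right-angled pentagons and hexagons obtained by cutting $B_i$ along common perpendiculars, and your identification of the role of the hypothesis $b<1$ as a trigonometric constraint is on target.

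That said, one step in your plan is off. For adjacent sides $s_1,s_2$ meeting at a right-angle vertex, you propose to show that any nondegenerate geodesic arc between them ``must effectively cross a $c$-edge of the octagon.'' This cannot work: the right-angled octagon $B_i$ is convex, so a geodesic arc from $s_1$ to $s_2$ stays entirely inside $B_i$ and meets no other side. The correct treatment of the adjacent case is again trigonometric: the arc $\delta$ together with subarcs of $s_1$ and $s_2$ bounds a Lambert-type quadrilateral (two or three right angles), and the relevant hyperbolic identities, combined with the fact that the side opposite the vertex has length at least the distance between the two sides adjacent to that vertex in the full octagon, give $\ell(\delta)>c$. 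You will also want to explicitly include the case where $\delta$ has both endpoints on the \emph{same} side of $B_i$, since the lemma as stated does not exclude that possibility.
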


As a result, $S_1$ and $S_2$ each have sets of four systoles $\{\gamma_i\}$ and $\{\gamma'_i\}$ ($1 \leq i \leq 4$) respectively of length $c$. In $S_1$, the $\gamma_i$ are between $B_i$ and $B_{i + 1}$ for $i \in \{1, 3, 5, 7\}$. On the other hand, in $S_2$, the $\gamma'_i$ are between $B'_i$ and $B'_{i + 1}$ for $i \in \{2, 4, 6, 8\}$ (where, as always, $i + 1$ is taken mod 8). One can easily check that cutting along the multicurve $\{\gamma_i\}$ in $S_1$ yields a single connected component that is topologically a torus with 8 boundary components, while cutting along $\{\gamma'_i\} \subset S_2$ yields two connected components, each of which is topologically a torus with four boundary components. If $S_1$ and $S_2$ were isometric, cutting along their systoles would yield homeomorphic connected components, which is clearly not the case here. \\

\noindent \textbf{Buser's surfaces are iso-length-spectral.} Next, for every closed geodesic in $S_1$, Buser constructs one in $S_2$ with the same length (and vice versa) using a technique he calls \textit{transplantation}. Given a closed geodesic $\gamma \subset S_1$ with a starting point $p$ in the interior of some building block $B_i \subset S_1$, Buser specifies which building block $B'_{k(i)} \subset S_2$ one should start constructing $\gamma' \subset S_2$ in so that $\ell(\gamma) = \ell(\gamma')$. He uses the following set of rules, which depend on the parities of $\#a$ and $\#b$, defined below:

\begin{algorithm}[5.3 (Initiation), \cite{buser1}]
\label{algorithm:originit} Let $\#a$ and $\#b$ be the number of times a curve $\gamma$ transversely crosses sides of lengths $a$ and $b$ respectively, and let $B_{n_1}$ (resp. $B'_{n'_1}$) denote the building blocks in $S_1$ (resp. $S_2$) in which to initiate $\gamma$ (resp. $\gamma'$). 

\begin{enumerate}
\item If $\#a$ is even, take $n_1 = n'_1$;
\item If $\#a$ is odd and $\#b$ is even, take $n'_1 = n_1 + 1$;
\item If $\#a$ and $\#b$ are both odd, take $n'_1 = n_1 + 2$. 
\end{enumerate}
\end{algorithm}

To construct $\gamma'$, one simply decomposes $\gamma$ into geodesic segments $\{\gamma_j\}_{j = 1}^{N}$ such that each $\gamma_j$ is contained completely in some building block $B_j$ and $\gamma_1$ and $\gamma_N$ are contained in the same building block and share an endpoint at $p$. Then, for each $\gamma_j$, taking advantage of the fact that the building blocks in $S_1$ and $S_2$ are all identical, one can construct a $\gamma'_j \subset B'_j$ identical to each $\gamma_j \subset B_j$. Buser shows that following \Cref{algorithm:originit}, the set of $\{\gamma'_j\}$ will always close up to a geodesic loop. Furthermore, since $\ell(\gamma_j) = \ell(\gamma'_j)$ for all $j$, it follows that $\ell(\gamma) = \ell(\gamma')$, as desired. The same set of initiation rules also applies in the other direction (constructing a geodesic in $S_1$ given one in $S_2$).

\subsection{Buser's techniques in the surface amalgam setting} We now specify some notation and establish some facts used in the iso-length-spectrality proofs in the remainder of the paper. The results and definitions in this section are either heavily inspired by or taken directly from \cite{buser1}. 

Let $\beta_i$ (resp. $\beta'_i$) be a connected geodesic segment in $S_1$ (resp. $S_2$) which can be written as a union $\bigcup\limits_{k = 1}^L \gamma_{i, k}$ (resp. $\bigcup\limits_{k = 1}^L \gamma'_{i, k}$ ) of segments each completely contained in a single building block. We will see later that every closed geodesic in a surface amalgam constructed in this paper can be written as a concatenation of $\beta_i$'s or $\beta'_i$'s. We define \begin{equation}\label{eqn:delta}\delta_i(k) = n'_{i, k} - n_{i, k} \pmod 8,\end{equation} where $n_{i, k}$ (resp. $n'_{i, k}$) is the index of the building block containing $\gamma_{i, k}$ (resp. $\gamma'_{i, k}$). Thus, when we say ``initiate $\beta'_i$ with the rule $\delta_i(0) = N$," we mean that we will set $n'_{i, 0}$ equal to $n_{i, 0} + N$. In other words, if $\beta_i$ starts in $B_{n_{i, 0}} \subset S_1$, then $\beta'_i$ will start in $B'_{n_{i, 0} + N} \subset S_2$. 

For the convenience of the reader, we list observations from Section 5 of \cite{buser1} used to prove the validity of \Cref{algorithm:originit}, which may be checked. We will also use these observations extensively in iso-length-spectrality proofs in the remainder of the paper. Note that while Buser works with closed surfaces, the segments $\beta_i$ and $\beta'_i$ are contained entirely within closed surfaces, so the observations are still applicable in the setting of surface amalgams. 

\begin{lemma}[c.f. Proof of 5.3 in \cite{buser1}] \label{lemma:delta} Let $\delta_i$, $\beta_i$, and $\beta'_i$ be as above. Then the following changes to $\delta_i$ are observed whenever $\beta_i$ and $\beta'_i$ cross edges of building blocks of $S_1$ and $S_2$ respectively: 
\begin{enumerate}
    \item \emph{Crossing a side of length $a$.} If $\delta_i(k)$ is even, then $\delta_i(k + 1) = \delta_i(k) + 4 \pmod 8$. If $\delta_i(k)$ is odd, then $\delta_i(k + 1) = \delta_i(k)$. 
    \item \emph{Crossing a side of length $b$.} If $\delta_i(k) = 0$ or $4$, $\delta_i(k + 1) = \delta_i(k)$. If $\delta_i(k) = \pm 2$, $\delta_i(k + 1) = \delta_i(k) + 4 \pmod 8$. There are other possible scenarios, but only these are used in this paper. 
    \item \emph{Crossing a side of length $c$.} Regardless of the value of $\delta_i(k)$, $\delta_i(k + 1) = \delta_i(k)$. 
\end{enumerate}
\end{lemma}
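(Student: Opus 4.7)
The plan is to reduce each of the three statements to a finite case check using the gluing patterns depicted in Figure 1. First, I would record, for each edge type $x \in \{a,b,c\}$, the permutation $\sigma_x$ of $\{1,\ldots,8\}$ that sends the index of a building block in $S_1$ to the index of the block glued to it across an $x$-side, and similarly record $\sigma'_x$ for $S_2$. These six permutations are the permutation representations of the three standard generators of Gerst's group $G = (\mathbb{Z}/8\mathbb{Z})^\times \ltimes (\mathbb{Z}/8\mathbb{Z})^+$ on the two coset spaces $G/H_1$ and $G/H_2$ corresponding to the almost-conjugate subgroups of Gerst's example, so they come essentially for free from Sunada's input data.

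With the permutations fixed, the identity $n'_{i,k} = n_{i,k} + \delta_i(k) \pmod 8$ gives
\[
\delta_i(k+1) \;=\; \sigma'_x\!\bigl(n_{i,k} + \delta_i(k)\bigr) \;-\; \sigma_x(n_{i,k}) \pmod 8,
\]
turning each assertion into a direct substitution that does not depend on the particular block index $n_{i,k}$ (invariance under the cyclic relabeling of blocks). For $x = c$, the two gluing permutations should agree (both equal to the ``next block'' translation responsible for the four systoles identified earlier), so the right-hand side collapses to $\delta_i(k)$, yielding case (3). For $x = a$, the permutations are fixed-point-free involutions, and the key structural feature inherited from the almost-conjugacy of $H_1$ and $H_2$ is that they agree on indices of one parity and differ by $4$ on the other; this produces the parity-dependent dichotomy in case (1). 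For $x = b$, an analogous computation on $\sigma_b,\sigma'_b$ yields the $\{0,4\}$-versus-$\{\pm 2\}$ split of case (2), and the remaining values of $\delta_i(k)$ are legitimately omitted because they are never realized in the transplantation arguments applied later.

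The main obstacle is purely bookkeeping rather than mathematics. I must fix a consistent labeling of the $a$-, $b$-, and $c$-sides within each building block, match this labeling coherently across $S_1$ and $S_2$, orient the identified edges compatibly so that the permutations $\sigma_x$ and $\sigma'_x$ are actually comparable, and handle mod-$8$ arithmetic carefully so that signs and offsets are tracked correctly. Once this setup is in place and the six permutations are read off from Figure 1, each of the three items in the lemma is a one-line verification.
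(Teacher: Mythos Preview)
Your proposal is correct and aligns with the paper's treatment: the paper does not give a standalone proof of this lemma but simply records it as a list of observations from Buser's Section~5 ``which may be checked,'' i.e., by direct inspection of the gluing data in Figure~1. Your plan to encode the $a$-, $b$-, and $c$-gluings of $S_1$ and $S_2$ as permutations $\sigma_x,\sigma'_x$ and then compute $\delta_i(k+1)=\sigma'_x(n_{i,k}+\delta_i(k))-\sigma_x(n_{i,k})\pmod 8$ is exactly such a check, carried out systematically.

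One small caution on your phrasing: the assertion that the result ``does not depend on the particular block index $n_{i,k}$'' via cyclic relabeling is not an a priori symmetry you can invoke---the gluing permutations are not cyclically invariant (e.g., the $a$-gluing depends on the parity of the block index). Rather, the independence from $n_{i,k}$ is precisely the content of the lemma and is something you verify after writing down the permutations and running through the finite list of cases. Also note that each building block carries two sides of each length (left/right $c$-sides, etc.), so strictly speaking you have a pair of permutations per edge type; you should confirm that the effect on $\delta_i$ is the same for both members of each pair. With those bookkeeping points handled, the verification goes through exactly as you describe.
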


\subsubsection{Translated and transplanted copies} Next, we define \textit{translated copies} of curves, which, roughly speaking, are locally isometric copies of a curve on the \textit{same} surface (either $S_1$ or $S_2$). In contrast, a \textit{transplanted copy} of a curve in $S_1$ is a curve on the \textit{other} surface $S_2$ which has the same length (and vice versa). To formalize this, we first present a definition from \cite{buser1}:

\begin{definition} [c.f. Definition 5.2, \cite{buser1}] \label{def:locallycong} Let $\beta$ and $\beta'$ be two curves in $S_1$ or $S_2$ which have the same length. Suppose $\beta = \bigcup\limits_{k = 1}^{L} \beta_k$, where each $\beta_k$ is a curve contained entirely in a single building block. Similarly, suppose $\beta'$ can be written as the union $\beta' = \bigcup\limits_{k = 1}^{L} \beta'_k$. Then $\beta$ and $\beta'$ are \textit{locally congruent} if there exist local isometries $\{\varphi_k\}_{k = 1}^L$ such that $\varphi_k(N(\beta_k)) = N(\beta'_k)$ for every $k \in [1, L]$, where $N(\beta_k)$ and $N(\beta'_k)$ are neighborhoods of $\beta_k$ and $\beta'_k$ respectively. 
\end{definition} 

We can then define translated copies:

\begin{definition} [Translated copies of geodesic segments] Given a geodesic segment $\beta = \bigcup\limits_{k = 1}^L \beta_k$ in $S_1$, a \textit{translated copy} of $\beta$ is a curve $\alpha = \bigcup\limits_{k = 1}^L \alpha_k \subset S_1$ which is locally congruent to $\beta$. We also require that if $\beta$ begins (resp. ends) on an edge with a certain label, then $\alpha$ begins (resp. ends) on an edge with the same label. Furthermore, if $\beta_k$ and $\alpha_k$ are contained in the building blocks $B_{n(k)}$ and $B_{m(k)}$ respectively, then for every $1 \leq k \leq L - 1$, $B_{m(k)}$ meets $B_{m(k + 1)}$ along an edge with the same label as that of the edge where $B_{n(k)}$ and $B_{n(k + 1)}$ meet. One can also replace $S_1$ in the definition with $S_2$.
% Suppose $\beta^1_i := \beta_i \subset S_1$ begins at a point $p_{i, 1} \in \gamma_{i, 1} \subset B_{n(i, 1)}$, where $\gamma_{i, 1}$ is a geodesic segment  contained in some building block $B_{n(i, 1)}$. Further suppose that $\gamma_{i, 1}$ is a subset of the gluing geodesic of $X$ which is identified with geodesic segments in $M$ other building blocks. A \textit{translated copy $\beta^j_i$} of $\beta_i$ ($j = 1, 2, ..., M$) is a connected segment in $S_1$ that is locally congruent to $\beta_{i, 1}$ and  begins at a point $p_{i, j} \in \gamma_{i, j} \subset B_{n(i, j)}$ (where as before, the building block $B_{n(i, j)}$ contains the geodesic segment $\gamma_{i, j}$) such that $p_{i, j}$ is identified with $p_{i, 1}$ when $\gamma_{i, j}$ is identified with $\gamma_{i, 1}$ to create a segment of the gluing geodesic.
\end{definition} 

Following terminology from Section 11.6 of \cite{buser2}, we also define the following. 

\begin{definition}[Transplanted copies of geodesic segments] Given a geodesic segment $\beta = \bigcup\limits_{k = 1}^L \beta_k$ in $S_1$, a \textit{transplanted copy} of $\beta$ is a curve $\beta' = \bigcup\limits_{k = 1}^L \beta'_k \subset S_2$ which is locally congruent to $\beta$. We also require that if $\beta$ begins (resp. ends) on an edge with a certain label, then $\beta'$ begins (resp. ends) on an edge with the same label. Furthermore, if $\beta_k$ and $\beta'_k$ are contained in the building blocks $B_{n(k)} \subset S_1$ and $B'_{n(k)} \subset S_2$ respectively, then for every $1 \leq k \leq L - 1$, $B'_{n(k)}$ meets $B'_{n(k + 1)}$ along an edge with the same label as that of the edge where $B_{n(k)}$ and $B_{n(k + 1)}$ meet. 
\end{definition}

\section{Proof of Theorem \ref{main}}

We now proceed with our proofs of the existence of iso-length-spectral, non-isometric examples. 

\subsection{Homeomorphic, non-isometric surface amalgams} 
\label{subsection:homeo}
We first construct two homeomorphic, iso-length-spectral, non-isometric surface amalgams. 

\begin{construction}\label{homeomorphic}
Consider $S_1$ and $S_2$ from \cite{buser1}. In each surface, consider the two closed geodesics of length $2a$ obtained from concatenating the top right edge of length $a$ on $B_2$ (resp. $B_6$) with the top left edge of length $a$ on $B_3$ (resp. $B_7$). We do the same for the bottom edges. We then identify all the red edges of length $a$ on $B_2$ and $B_6$ (resp. $B'_2$ and $B'_6$) from \Cref{fig:S1S2homeo} and all blue edges on $B_3$ and $B_7$ (resp. $B'_3$ and $B'_7$).  
\end{construction}

\begin{figure}[h!]
    \centering
\centerline{\includegraphics[width=1.1\textwidth]{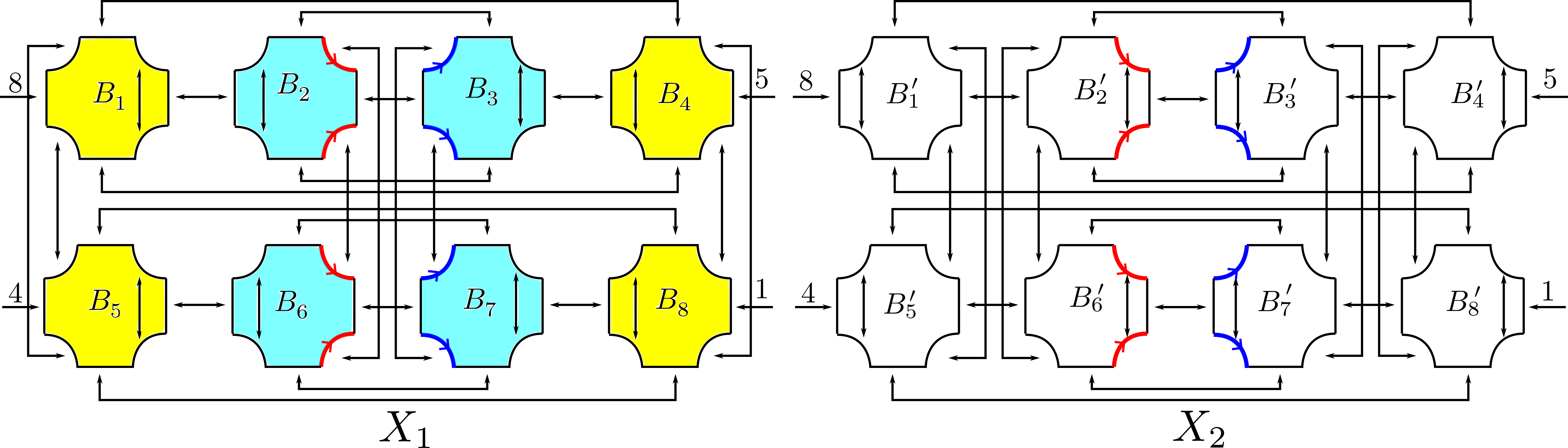}}
    \caption{Homeomorphic, non-isometric, iso-length-spectral surface amalgams. The gluing curve consists of two closed geodesics of length $2a$ identified together. Orientations of the gluing curves are specified in the figure. Geodesic segments that are the same colors in $X_1$ are identified; the same is true for $X_2$. Cutting along the gluing geodesics and the systoles yields two connected components (in yellow and blue) for $X_1$ but only one connected component for $X_2$.}
    \label{fig:S1S2homeo}
\end{figure}

\begin{proposition}\label{nonhomeoproof} The surface amalgams $X_1$ and $X_2$ from \Cref{homeomorphic} are iso-length-spectral and homeomorphic, but not isometric. 
\end{proposition}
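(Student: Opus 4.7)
The plan is to address the three claims of \Cref{nonhomeoproof} in turn.

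\emph{Homeomorphism.} Each $X_i$ is obtained from the genus-$5$ surface $S_i$ by performing identical combinatorial identifications of $a$-labeled edges on the octagonal building blocks $B_2, B_3, B_6, B_7$ (resp.\ $B'_2, B'_3, B'_6, B'_7$). Because these building blocks are congruent right-angled octagons with matching edge labels, and because $S_1$ and $S_2$ are themselves homeomorphic as closed genus-$5$ surfaces, each $X_i$ can be encoded by the same graph of spaces: identical vertex spaces (the surface components with labeled boundary obtained by cutting along the gluing locus) and identical edge spaces (the gluing curves). Matching this combinatorial data up directly yields a homeomorphism $X_1 \to X_2$.

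\emph{Non-isometry.} Here I would follow Buser's cutting strategy, using canonically defined curves. First I would verify that the $c$-edges still realize the systoles of each $X_i$: the building-block lemma of \cite{buser1} constrains arcs contained in individual blocks, and, provided the edge lengths are chosen so that $c < 2a$, neither the gluing curves themselves (of length $2a$) nor any closed geodesic crossing them can be shorter than $c$. The gluing locus is also isometry-invariant, being exactly the set of points whose link is not a circle. Hence any hypothetical isometry $X_1 \to X_2$ must carry the systole multicurve together with the gluing locus in $X_1$ to its analog in $X_2$, and would restrict to a homeomorphism between the complements $X_i \setminus (\text{systoles} \cup \text{gluing locus})$. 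But \Cref{fig:S1S2homeo} shows that this complement has two components in $X_1$ and only one in $X_2$, a contradiction.

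\emph{Iso-length-spectrality.} This is the step I expect to be the main obstacle, and would be handled by adapting Buser's transplantation technique to the amalgam setting. Given a closed geodesic $\gamma \subset X_1$, I would decompose it as a concatenation $\bigcup_k \gamma_k$ of sub-arcs each contained in a single building block, then initiate a transplanted arc $\gamma'_0 \subset X_2$ using an analog of \Cref{algorithm:originit}. Within any surface piece of $X_i$ (which is locally identical to $S_i$), the $\delta$-tracking of \Cref{lemma:delta} applies verbatim. The new phenomenon is that $\gamma$ may cross the gluing locus, jumping between building blocks via the identifications of \Cref{homeomorphic}. The main verification is that each such crossing is matched by an analogous crossing of $\gamma'$ in $X_2$ that preserves $\delta$, so that the $\gamma'_k$ assemble into a closed curve $\gamma' \subset X_2$ of the same length. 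Because the identifications pair $B_2$ with $B_6$ and $B_3$ with $B_7$ in \emph{both} amalgams, the index shifts along the gluing locus in $X_1$ and in $X_2$ agree, so $\delta$ is preserved across these crossings; a direct case analysis using \Cref{lemma:delta}(1) should complete the check. Combining this with the symmetric transplantation from $X_2$ back to $X_1$ yields a length-preserving bijection of closed geodesics.
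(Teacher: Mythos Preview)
Your arguments for homeomorphism and non-isometry are close to the paper's (the paper cuts along the systoles inside the single chamber and invokes Lafont's chamber bijection rather than cutting the whole amalgam, but the logic is the same).

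The substantive divergence is in iso-length-spectrality, and here your sketch has a real gap. The paper does \emph{not} transplant $\gamma$ wholesale. Instead it writes $\gamma=\bigcup_i\beta_i$ with each $\beta_i$ a maximal arc between consecutive hits of the gluing curve, proves that each $\beta_i$ has exactly two translated copies in $X_1$ and exactly two transplanted copies in $X_2$ (both families obtained by shifting the initial block index by $0$ or $4$), and then \emph{counts} the admissible concatenations of these pieces on each side. A three-case analysis (depending on which coloured edges $\beta_1,\beta_{N-1},\beta_N$ land on) gives $C_1(\gamma)=C_2(\gamma)$.

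Your one-shot transplantation runs into the following obstruction. For $\gamma'$ to meet the gluing locus of $X_2$ exactly when $\gamma$ meets that of $X_1$, you are forced to keep $\delta\in\{0,4\}$ throughout: the glued $a$--edges sit on blocks $2,3,6,7$ in both amalgams, and the specific red/blue labels match only under index shifts of $0$ or $4$. That forces the initiation $\delta(0)=0$, which is \emph{not} among Buser's rules in \Cref{algorithm:originit} when $\#a$ is odd, and in that regime there is no reason for $\gamma'$ to close up. Even when it does, the concatenation at the gluing curve can backtrack: if the segment $\beta'_i$ terminates with $\delta=4$ while $\beta'_{i+1}$ is initiated with $\delta=0$, the terminal block of $\beta'_i$ can coincide with the initial block of $\beta'_{i+1}$. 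Your promised ``direct case analysis using \Cref{lemma:delta}(1)'' does not address either failure mode; this is exactly what the paper's case-by-case count of admissible $\beta_N$ is doing.

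Finally, producing a transplant map in each direction is not yet a bijection of length multisets; you would still owe that the two maps are mutual inverses. The paper's counting argument sidesteps this entirely: once the two-copies claim is established, the combinatorics of admissible concatenations are literally identical on both sides, and $C_1(\gamma)=C_2(\gamma)$ follows without ever fixing a specific partner $\gamma'$ for $\gamma$.
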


\begin{proof} 
\noindent \textbf{$X_1$ and $X_2$ are homeomorphic.} Note that cutting along the four geodesics that are identified will yield a genus $3$ surface with four boundary components of length $2a$ for both $S_1$ and $S_2$. The two loops of length $2a$ are nonseparating for both $S_1$ and $S_2$ so cutting along them yields homeomorphic surfaces $S_{3, 4}$. Since there is bijection between homeomorphic chambers of $X_1$ and $X_2$, they are homeomorphic due to \cite{lafont}. \\

\begin{figure}[H]
    \centering
    \includegraphics[width=0.5\textwidth]{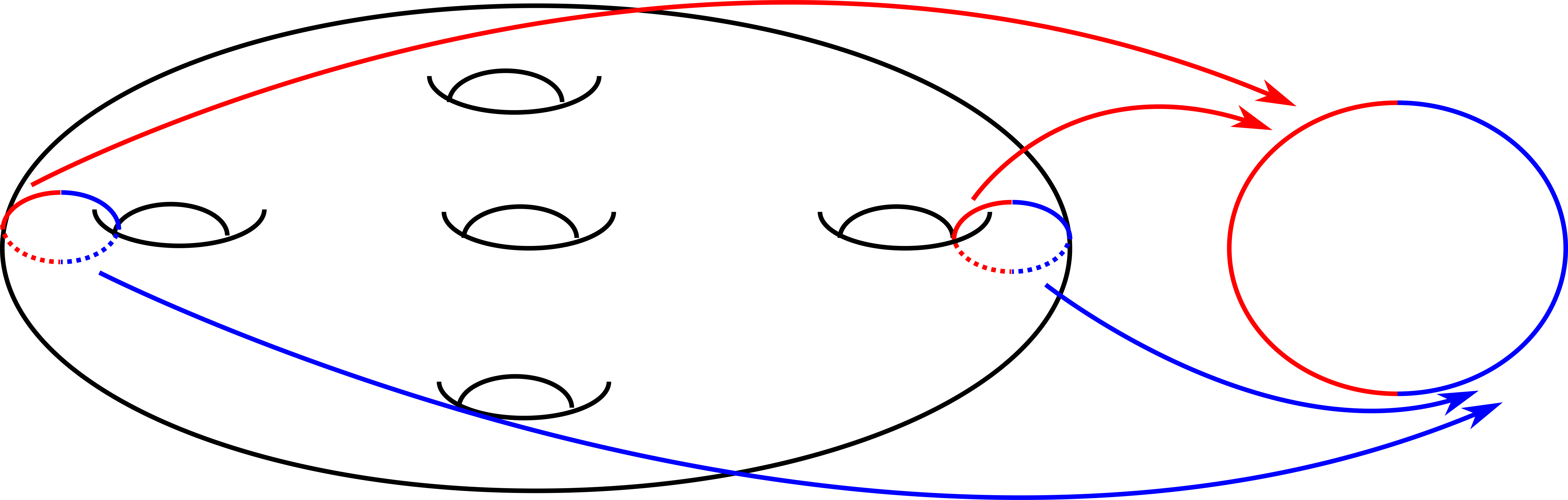}
    \caption{$X_1$ and $X_2$ are both homeomorphic to the above surface amalgam, created from identifying two non-separating closed geodesics on Buser's genus 5 example.}
    \label{fig:homeo}
\end{figure}

\noindent \textbf{$X_1$ and $X_2$ are not isometric.} In order for two surface amalgams to be isometric, there must be a bijection between isometric chambers due to the bijective correspondence between homeomorphic chambers (see \cite{lafont}). As with the examples in \cite{buser1}, the only systoles in $C_1$ and $C_2$, chambers in $X_1$ and $X_2$ respectively, are the four closed geodesics of length $c$. Cutting along these systoles yields two connected components for $C_1$ and one connected component for $C_2$; thus, $C_1$ and $C_2$ cannot be isometric.\\ 

\noindent \textbf{$X_1$ and $X_2$ are iso-length-spectral.} It suffices to consider geodesics that do not intersect the gluing curve, as \cite{buser1} already shows a 1-1 correspondence between closed geodesics completely contained in $S_1$ and $S_2$. Consider $\gamma$, a closed geodesic in $X_1$. Decompose $\gamma$ into $\bigcup\limits_{i = 1}^N \beta_i$ so that each $\beta_i$ is a continuous geodesic segment in the original surface $S_1$ with endpoints on the gluing curve. %Furthermore, we require that none of the interiors of the $\beta_i$'s intersect the gluing curve. 
We say a closed geodesic in $\gamma_1 \subset X_1$ is \textit{identical} to $\gamma$ if $\gamma_1$ and $\gamma$ begin at the same point $x \in X_1$ on the gluing curve, and $\gamma_1$ can be decomposed into $\bigcup\limits_{i = 1}^N \beta_i^1$, a union of translated copies of $\beta_i$ in $S_1$ with endpoints on the gluing curve. There is a natural isometry $\varphi$ between gluing geodesics in $X_1$ and those in $X_2$. We say $\gamma' \subset X_2$ is \textit{identical} to $\gamma \subset X_1$ if $\gamma'$ begins and ends at $\varphi(x) \in X_2$ while $\gamma$ begins and ends at $x \in X_1$, and $\gamma_2$ can be decomposed into $\bigcup\limits_{i = 1}^N \beta'_i$ so that each $\beta'_i$ is a transplanted copy of $\beta_i$ (in the sense of \cite{buser1}) that begins and ends on the gluing curve in $X_2$. We will argue that the numbers of closed geodesics identical to $\gamma$ in $X_1$ and $X_2$ are the same, which shows the length spectra are the same. 

We now construct closed geodesics identical to $\gamma$ in $X_1$ by concatenating \textit{admissible} translated copies of each $\beta_i$, which we define precisely below: 

\begin{definition}\label{def:admissible} We say that a choice of geodesic segment $\beta_i$, where $2 \leq i \leq N$, is \textit{admissible} if it is compatible with the previous choices of $\beta_j$ ($j < i$) in the following sense: 
\begin{enumerate}
    \item \textit{$\beta_{i - 1} \cup \beta_i$ does not backtrack}: $\beta_i$ does not start on the same side of the building block that $\beta_{i - 1}$ ends on (see (a) on \Cref{fig:inadmissible}), \textit{and} if $i = N$, $\beta_i$ does not end on the same side of the building block $\beta_1$ starts on; 
    \item \textit{$\beta_{i - 1} \cup \beta_i$ is continuous}: $\beta_i$ begins at the same point on the gluing curve that $\beta_{i - 1}$ ends on (see (b) on \Cref{fig:inadmissible}), \textit{and}, if $i = N$, $\beta_i$ ends at the same point on the gluing curve that $\beta_1$ begins at. 
\end{enumerate}
\end{definition} 

\begin{figure}[H]
    \centering
    \includegraphics[width=0.5\textwidth]{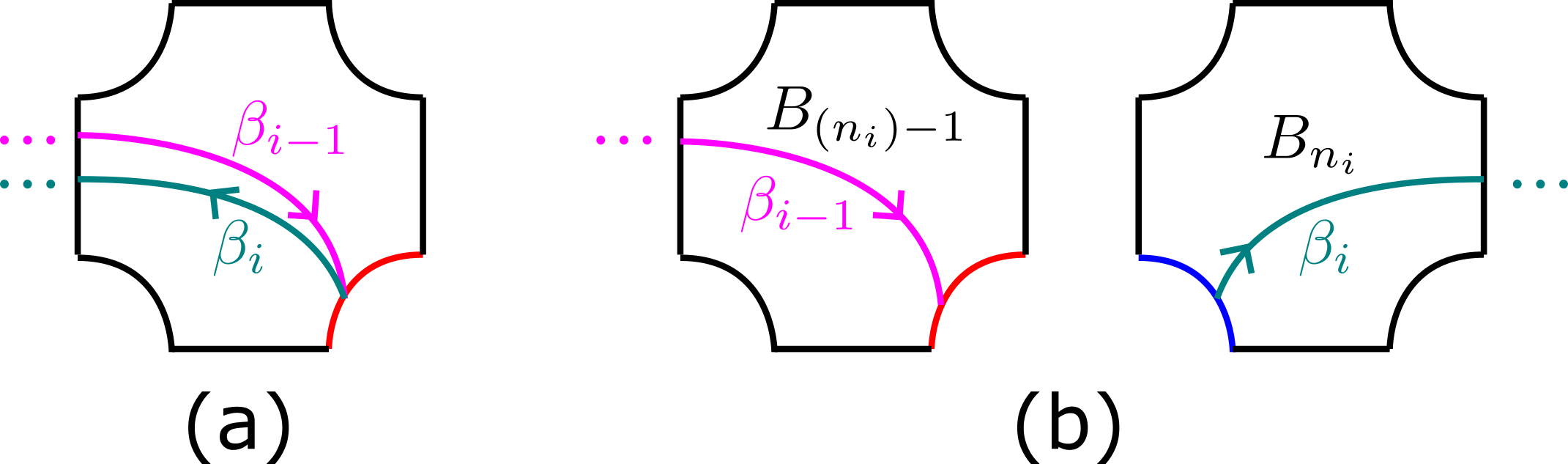}
    \caption{Two examples of (parts of) inadmissible $\beta_i$'s. In (a), $\beta_{i - 1} \cup \beta_i$ is a backtracking geodesic segment. In (b), $\beta_{i - 1}$ and $\beta_i$ begin at different points of the gluing curve, as the red and blue segments are not identified, which means $\beta_{i - 1} \cup \beta_i$ is disconnected.}
    \label{fig:inadmissible}
\end{figure}

We first prove the following:\\

\noindent \textbf{Claim.} \textit{In $X_1$, there is exactly one translate of $\beta_i^1 := \beta_i$, $\beta_i^2$, which begins and ends at the same point on the gluing curve $\beta_i$ begins and ends at.\\}

\noindent \textit{Proof of claim:} If $\beta_i$ begins on the top right (resp. top left, bottom right, or bottom left) side of length $a$ in $B_i$, initiate $\beta_i^2$ at the same point on the top right (resp. top left, bottom right, or bottom left) side of length $a$ in $B_{i + 4}$. Let $\beta_i^j = \bigcup\limits_{k = 0}^{L} \gamma_{i, k}^j$ be a decomposition of $\beta_i^j$ into geodesic segments that are contained entirely in a single building block, where $\gamma_{i, k}^j \subset B_{n_{i, k}^j}$ and $j = 1, 2$. Let $\epsilon_k = n_{i, k}^2 - n_{i, k}^1.$ Note in particular that $\epsilon_0 = 4$. We observe the following changes to $\epsilon_j$ in the following scenarios:

\begin{itemize}
    \item \textit{$\beta^1_i$ and $\beta^2_i$ cross sides of length $c$.} In this case, $n^j_{i, k + 1} = n^j_{i, k} \pm 1$ for both $j = 1, 2$ depending on whether the left or right side of a building block is crossed. Note that when crossing edges of length $c$, $\beta^1_i$ and $\beta^2_i$ either both cross left sides or both cross right sides. Then $\epsilon_{k + 1} = (n^2_{i, k} \pm 1) - (n^1_{i, k} \pm 1) = n^2_{i, k} - n^1_{i, k} = \epsilon_k$.
    \item \textit{$\beta^1_i$ and $\beta^2_i$ cross sides of length $a$.} In this case, for both $j = 1, 2$, $n^j_{i, k + 1} = n^j_{i, k}$ or $n^j_{i, k} + 4$, depending on the parity of the building block. Provided that $\epsilon_k = 4 \pmod 8$, either $\epsilon_{k + 1} = n^2_{i, k} - n^1_{i, k} = \epsilon_k$ or $\epsilon_{k + 1} = (n^2_{i, k}  + 4) - (n^1_{i, k} + 4) = \epsilon_k.$
    \item \textit{$\beta^1_i$ and $\beta^2_i$ cross sides of length $b$.} In this case, for both $j = 1, 2$, $n^j_{i, k + 1} = n^j_{i, k} \pm 1$ or $n^j_{i, k} \pm 3$, depending on the label of the building block. But again, if $\epsilon_k = 4$, then $\epsilon_{k + 1} = (n^2_{i, k} \pm 1) - (n^1_{i, k} \pm 1) = n^2_{i, k} - n^1_{i, k} = \epsilon_k$ or $\epsilon_{k + 1} = (n^2_{i, k} \pm 3) - (n^1_{i, k} \pm 3) = n^2_{i, k} - n^1_{i, k} = \epsilon_k$. 
\end{itemize}

From this, we conclude that in fact, $\epsilon_L = 4$. That is, if $\beta^1_i$ ends on an edge of the building block $B_{N}$, then $\beta^2_i$ ends on the same point in the corresponding edge of $B_{N + 4}$. Since there are only two edges that are top right, top left, bottom right, or bottom left edges of building blocks which are identified to create a subarc of the gluing geodesic, there cannot be another translate of $\beta^1_i$ sharing beginning and end points with $\beta^1_i$. Thus, there is exactly one such $\beta^2_i$, as claimed.
\qed\\

We now compute $C_1(\gamma)$, the number of closed geodesics in $X_1$ identical to $\gamma$. Fix a copy of $\beta_1$, say $\beta_1^1 \subset X_1$. By construction, $\beta^1_1$ can be concatenated with any of the two copies of $\beta_2$ in $\{\beta_2^j\}_{j = 1}^2$ to create a geodesic segment \textit{unless} some $\beta_2^j$ is chosen so that $\beta_2^j \cup \beta_1^1$ backtracks, in which case there is only one admissible copy of $\beta_2$ which can be concatenated with $\beta_1^1$. Using the same logic for all $2 \leq i \leq N$, we have the following general fact: 
\begin{align*} c_i &:= \#\{\beta^j_i \text{ which can be concatenated with some fixed copy of } \beta_{i - 1}\} \\&= \begin{cases}
    1 \text{ if $\beta_{i - 1}$ ends on a top red edge from \Cref{fig:S1S2homeo} and $\beta_i$ begins on a top red edge,}\\\text{   or the statement is true if ``red" is replaced with ``blue" and/or ``top" is replaced with ``bottom";}\\
    2 \text{ otherwise.}
\end{cases}
\end{align*}

We now examine how to choose a copy of $\beta_N$ from $\{\beta_N^j\}_{j = 1}^{2}$. The set of admissible $\beta_N$ now depends on our choice of $\beta_{N - 1}^j$ and $\beta_1^1$, and specifically on whether the following facts are true: 

\begin{itemize} 
\item \textit{Fact 1:} $\beta^j_{N - 1}$ ends on a top red edge and $\beta_N$ begins on a top red edge (or the statement is true if we replace ``red" with ``blue" and/or ``top" with ``bottom"); 
\item \textit{Fact 2:} $\beta_N$ ends on a top red edge and $\beta_1$ begins on a top red edge (or the statement is true if we replace ``red" with ``blue" and/or ``top" with ``bottom").
\end{itemize} 

We now do some casework, depending on whether Facts 1 and 2 are satisfied. \\

\noindent \textit{Case One}: Neither Fact 1 nor Fact 2 is true. In this case, any choice of pairs of translates of $\beta_1$ and $\beta_N$ is compatible. Thus, each choice of $\beta_1^j$ ($j = 1, 2$) has $\prod\limits_{i = 2}^{N} c_i = \bigg(\prod\limits_{i = 2}^{N - 1} c_i \bigg)(2)$ choices of sequences of admissible translates of $\beta_i$. Then $C_1(\gamma) = 2\bigg(\prod\limits_{i = 2}^{N - 1} c_i\bigg)(2)$. See \Cref{fig:countingcase1}.

\begin{figure}[h]
    \centering
\centerline{\includegraphics[width=1.15\textwidth]{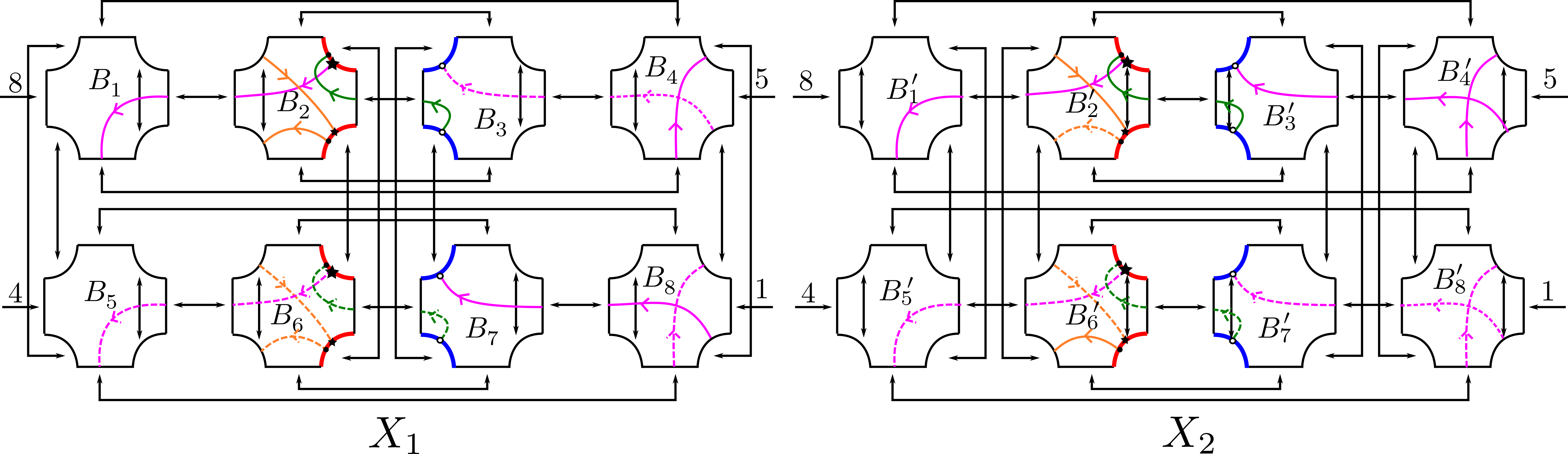}}
    \caption{An illustration of Case 1, with identical copies of closed geodesics created by concatenating the pink ($\beta_1)$, green ($\beta_2$), and orange ($\beta_3$) geodesic segments. Two closed geodesics are illustrated in each of $X_1$ and $X_2$, one solid and one dashed. The star icon indicates the start of each geodesic. Here, $N = 3$ and $c_2 = 2$, so $C_1(\gamma) = 2(2)(2) = C_2(\gamma)$. We can check that there are indeed eight identical, non-backtracking closed geodesics in each of $X_1$ and $X_2$.}
    \label{fig:countingcase1}
\end{figure}

\noindent \textit{Case Two}: One of Fact 1 or Fact 2 is true. Suppose Fact 1 is true. Then any choice of $\beta_N^j$ is compatible with a fixed choice of translate of $\beta_1$, but there is only one admissible translate of $\beta_N$ for each fixed translate of $\beta_{N - 1}$. Suppose Fact 2 is true. Again, given a fixed $\beta_1^j$, there is only one admissible translate of $\beta_N$, as one of them is not compatible with $\beta_1^j$. In either case, we slightly modify the equation for $C_1(\gamma)$ from the previous case: $C_1(\gamma) = 2\bigg(\prod\limits_{i = 2}^{N - 1} c_i\bigg)(1)$. See \Cref{fig:countingcase2}.

\begin{figure}[h]
    \centering
\centerline{\includegraphics[width=1.1\textwidth]{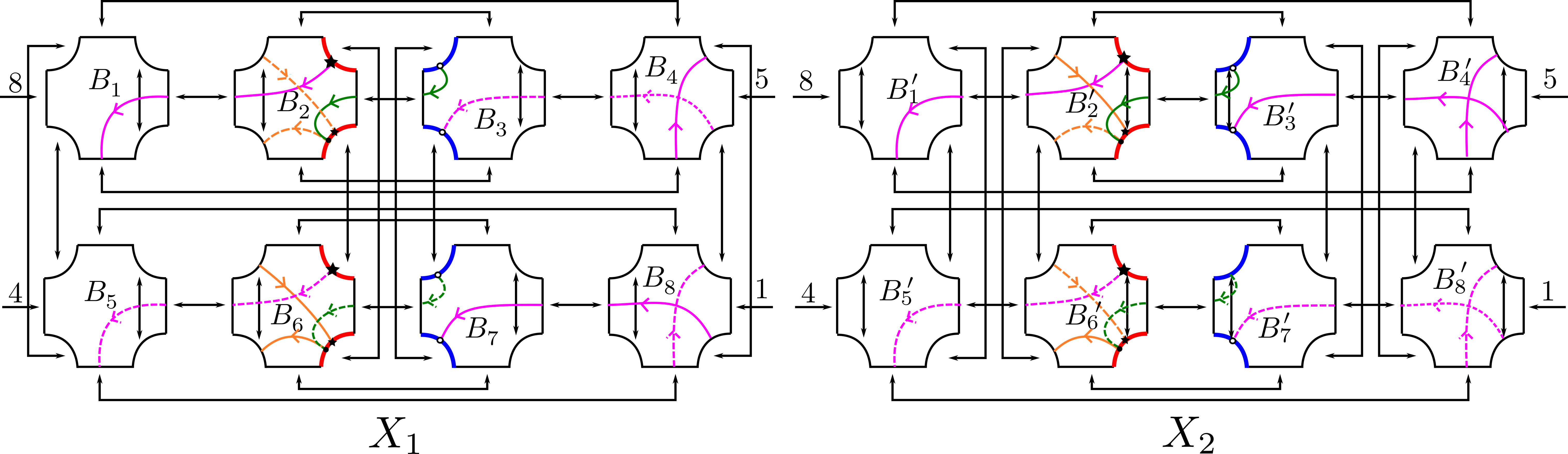}}
    \caption{An illustration for Case 2. Again, two closed geodesics are shown, one solid and one dashed, and the star indicates the starting point of each geodesic. Here, $N = 3$ and $c_2 = 2$, so $C_1(\gamma) = 2(2) = C_2(\gamma)$. We can check that there are four identical, non-backtracking closed geodesics in each of $X_1$ and $X_2$.}
    \label{fig:countingcase2}
\end{figure}

\noindent \textit{Case Three}: Both Fact 1 and Fact 2 are true. This case is the most nuanced, but surprisingly, the count is the same as in Case 2. There are four possible pairs of geodesic segments $\beta_1^j$ and $\beta_{N - 1}^k$ ($j, k \in \{1, 2\}$) which satisfy (2) of \Cref{def:admissible}. For exactly two of these pairs, there is one inadmissible copy of $\beta_N$: the translate of $\beta_N$ with starting point coinciding with the endpoint of $\beta_{N - 1}^k$ and endpoint coinciding with the starting point of $\beta_1^j$. For all other pairs, there are two inadmissible (and thus no admissible) copies of $\beta_N$: one translate with endpoint coinciding with the starting point of $\beta_1^j$ and one (other) translate with starting point coinciding with the endpoint of $\beta_{N - 1}^k$. Thus, $C_1(\gamma) = 2\bigg(\prod\limits_{i = 2}^{N - 1} c_i\bigg)(1) + 2\bigg(\prod\limits_{i = 2}^{N - 1} c_i\bigg)(0) = 2\bigg(\prod\limits_{i = 2}^{N - 1} c_i\bigg)$. See \Cref{fig:countingcase3}. 

\begin{figure}[h]
    \centering
\centerline{\includegraphics[width=1.1\textwidth]{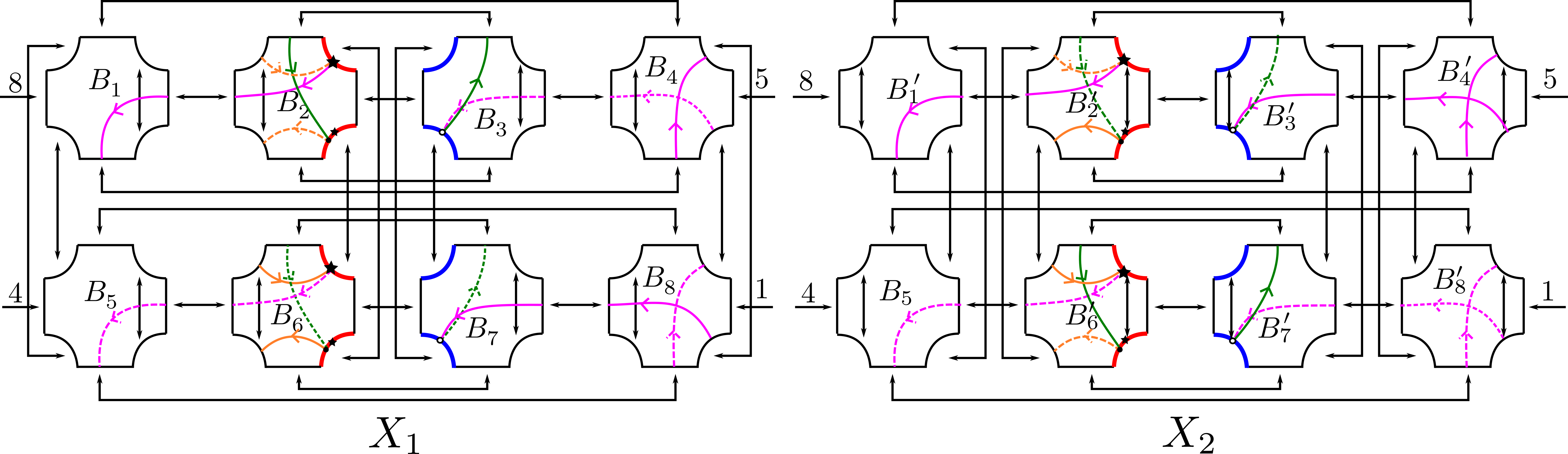}}
    \caption{An illustration for Case 3. Again, $N = 3$ and $c_2 = 2$, so $C_1(\gamma) = 2(2) = C_2(\gamma)$. We can check that there are indeed four identical, non-backtracking closed geodesics in each of $X_1$ and $X_2$.}
    \label{fig:countingcase3}
\end{figure}

We now construct an identical transplanted copy of each $\beta_i^j$, ${\beta'}_i^j = \bigcup\limits_{k = 0}^L {\gamma'}^j_{i, k}$, where ${\gamma'}^j_{i, k} \subset B'_{{n'}^j_{i, k}}$ and ${\beta'}_i^j$ is initiated with the rule $\delta^j_i(0) := {n'}^j_{i, 0} - {n}^j_{i, 0} = 0$ (see \Cref{eqn:delta}). We claim that $\beta_i^j$ meets the gluing curve if and only if ${\beta'}_i^j$ intersects a gluing curve. By \Cref{lemma:delta}, if $\delta^j_i(k) = 0$ or $4$, crossing a side of length $c$ or $b$ does not change $\delta^j_i$ and crossing a side of length $a$ changes $\delta^j_i$ to $\delta^j_i + 4$. Thus, $\delta^j_i(k) = 0$ or $4$ for all $0 \leq k \leq L$; that is, $n^j_{i,k}$ and ${n'}^j_{i, k}$ differ by either $0$ or $4$ (see, for example, any of the previous 3 figures). Then $\beta_i^j$ meets a gluing curve if and only if ${\beta'}_i^j$ does. This establishes a natural bijection between $\{\beta^j_i\}_{j = 1}^2$ and $\{{\beta'}^j_i\}_{j = 1}^2$, so one can make the same computations as before for calculating $C_2(\gamma)$, which counts the number of identical copies of $\gamma$ in $X_2$. We obtain in the end that $C_1(\gamma) = C_2(\gamma)$. One can therefore bijectively map copies of $\gamma$ in $X_1$ to those in $X_2$. 
\end{proof} 

\subsection{Non-homeomorphic, iso-length-spectral surface amalgams}

We first remind the reader of a convenient way to determine whether two simple, thick surface amalgams are homeomorphic, using a criterion established by Lafont in \cite{lafont}. 

\begin{proposition}[Corollary 3.4 of \cite{lafont}]
\label{prop:homeoSA} If $f: \widetilde{X_1} \rightarrow \widetilde{X_2}$ is a quasi-isometry, then $f$ induces a bijection between homeomorphic chambers of $X_1$ and $X_2$. 
\end{proposition}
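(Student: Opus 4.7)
The plan is to adapt Lafont's original argument from \cite{lafont}. Under the hypotheses, each universal cover $\widetilde{X_i}$ is a CAT$(-1)$ space with a canonical stratification: an open, dense union of ``chamber interiors'' (each an isometric copy of the universal cover of one of the constituent hyperbolic surfaces with boundary), separated by a singular $1$-complex $\Sigma_i$ made up of lifts of the gluing geodesics, along each of which three or more chamber closures meet. The goal is to show that any quasi-isometry $f: \widetilde{X_1} \to \widetilde{X_2}$ coarsely respects this stratification and descends to a bijection of chambers of $X_1$ and $X_2$ preserving homeomorphism type.

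The first step would be to show that $f$ carries $\Sigma_1$ into a bounded neighborhood of $\Sigma_2$ (and vice versa). The key observation is that $p \in \widetilde{X_i}$ lies on $\Sigma_i$ if and only if it satisfies a QI-invariant local branching condition: for arbitrarily large $R$, the complement $\widetilde{X_i} \setminus B(p,R)$ has more than two deep unbounded components meeting every sufficiently large concentric sphere, whereas at a point in the interior of a chamber the local picture is that of $\mathbb{H}^2$ with exactly one unbounded complement. Because quasi-isometries coarsely preserve the number of unbounded components of complements of large balls, $f$ must send singular points to within bounded distance of singular points and, restricted to each singular geodesic line, induce a quasi-isometry onto another singular line.

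After a bounded adjustment, $f$ then induces a bijection between the chambers of $\widetilde{X_1}$ and those of $\widetilde{X_2}$, restricting on each chamber to a quasi-isometry of pairs $(\widetilde{C}, \partial \widetilde{C}) \to (\widetilde{C'}, \partial \widetilde{C'})$, where the boundary consists of the chamber's intersection with $\Sigma_i$. Since each chamber covers a compact hyperbolic surface with boundary, and the QI type of the pair determines the peripheral structure (and hence the topological type) of such a surface, corresponding chambers of $X_1$ and $X_2$ must be homeomorphic.

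The main obstacle is the final descent from a chamber bijection on $\widetilde{X_i}$ to one on $X_i$: the lifted bijection is only coarsely $f$-equivariant with respect to the $\pi_1(X_i)$-actions. I would resolve this by noting that $f$ induces a quasi-isometry of the Bass--Serre trees parametrizing chambers (with edges labeled by gluing geodesics), and that this QI of trees respects the graph-of-groups decompositions up to bounded error. Since each action is cocompact with finitely many chamber orbits, passing to the orbit set yields a well-defined bijection between chambers of $X_1$ and chambers of $X_2$ that preserves homeomorphism type, as required.
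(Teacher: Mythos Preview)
The paper does not give a proof of this proposition at all: it is quoted verbatim as Corollary~3.4 of \cite{lafont} and used as a black box. So there is no ``paper's own proof'' to compare against; what you have written is an attempted reconstruction of Lafont's argument.

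Your sketch is in the right spirit (coarse preservation of the singular set, inducing a correspondence of chambers), but there is a genuine gap in the third step. You assert that the quasi-isometry type of the pair $(\widetilde{C},\partial\widetilde{C})$ ``determines the peripheral structure (and hence the topological type)'' of the underlying compact surface with boundary. This is false as stated. Every compact hyperbolic surface with nonempty geodesic boundary has free fundamental group, and all nonabelian free groups are mutually quasi-isometric; their universal covers (convex subsets of $\mathbb{H}^2$ bounded by geodesic lines) are likewise all pairwise quasi-isometric, even as pairs. Concretely, a pair of pants and a one-holed torus both have $\pi_1\cong F_2$, yet have different numbers of boundary components and are not homeomorphic; no quasi-isometry invariant of the pair distinguishes them. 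So from the chamber-by-chamber QI alone you cannot conclude that corresponding chambers downstairs are homeomorphic. Your descent step has a related problem: a QI need not be even coarsely equivariant, so ``passing to the orbit set'' on the Bass--Serre tree does not automatically yield a well-defined bijection of chambers of $X_1$ with chambers of $X_2$.

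Lafont's actual argument avoids this by working with the Gromov boundary $\partial_\infty\widetilde{X_i}$ rather than with coarse chamber-by-chamber comparisons. The quasi-isometry induces a genuine \emph{homeomorphism} of boundaries, and the endpoints of branching geodesics are characterized intrinsically as local cut points of high valence in $\partial_\infty\widetilde{X_i}$. The resulting boundary homeomorphism then matches up the closures of chamber limit sets and, because it is a homeomorphism (not merely a QI), carries enough structure to recover the topological type of each chamber and to produce the desired bijection downstairs. If you want to reconstruct the proof, that boundary-at-infinity mechanism is the missing ingredient.
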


Recall that if two compact metric spaces are homeomorphic, then there is a quasi-isometry between their universal covers by the Milnor-Schwarz Lemma. From \Cref{prop:homeoSA}, it then follows that the chambers of two homeomorphic simple, thick surface amalgams are necessarily in bijective correspondence with each other. With this in mind, we now construct two isopectral, non-homeomorphic hyperbolic surface amalgams. 

\begin{construction}\label{nonhomeo}
Consider $S_1$ and $S_2$ from \cite{buser1} which have sets of systoles $\{\gamma_i\}_{i = 1}^4$ and $\{\gamma'_i\}_{i = 1}^4$. Construct $X_1$ by identifying all the systoles $\gamma_i \subset X_1$ according to the orientations specified in \Cref{fig:conncomps}. Similarly, glue together all the $\gamma'_i \subset S_2$ to construct $X_2$.  
\end{construction}

\begin{proposition}\label{prop:nonhomeo} The surface amalgams $X_1$ and $X_2$ from Construction \ref{nonhomeo} are iso-length-spectral but not homeomorphic. 
\end{proposition}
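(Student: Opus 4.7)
For non-homeomorphism, I will apply Lafont's criterion \Cref{prop:homeoSA}. Cutting $X_1$ along its gluing curve recovers $S_1$ cut along its four systoles, which yields a single chamber homeomorphic to a torus with $8$ boundary components. The analogous cut for $X_2$ yields two chambers, each homeomorphic to a torus with $4$ boundary components. Since the sets of chambers have different cardinalities, no homeomorphism $X_1 \to X_2$ can induce a chamber bijection, so the two surface amalgams are not homeomorphic.

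For iso-length-spectrality, I will adapt the ``identical copies'' counting framework of \Cref{nonhomeoproof}. Buser's original work already provides a length-preserving bijection between closed geodesics contained entirely in $S_1$ and those contained entirely in $S_2$, so I focus on closed geodesics that meet the gluing curve. Given such $\gamma \subset X_1$, decompose $\gamma = \bigcup_{i=1}^{N} \beta_i$, where each $\beta_i$ is a geodesic arc inside $S_1$ with endpoints on the union of the four systoles, and define $C_1(\gamma)$ and $C_2(\gamma)$ as in \Cref{nonhomeoproof}. The goal is to show $C_1(\gamma) = C_2(\gamma)$.

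To count translates I will use the order-$4$ rotational symmetry of Buser's construction (rotation by two building blocks), which cyclically permutes the four systoles. This produces, for each $\beta_i$, up to four translated copies $\{\beta_i^j\}_{j=1}^{4}$ indexed by offsets $\epsilon_0 \in \{0, 2, 4, 6\}$; applying \Cref{lemma:delta} to each type of side crossing shows that the subsets $\{0, 4\}$ and $\{2, 6\}$ of offsets are each invariant under $a$-, $b$-, and $c$-crossings, so every translate ends on one of the four systoles of $S_1$ and hence at the corresponding point on the gluing curve of $X_1$. The same symmetry, combined with transplants initiated by $\delta(0) = 0$ (whose $\delta_k$ remains in $\{0, 4\}$ by \Cref{lemma:delta}), yields four translated transplants $\{{\beta'}_i^{\,j}\}_{j=1}^{4}$ in $X_2$ whose endpoints also lie on the gluing curve. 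A natural bijection $\beta_i^j \leftrightarrow {\beta'}_i^{\,j}$ follows.

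Finally, with an admissibility criterion analogous to \Cref{def:admissible} requiring continuity at, and non-backtracking through, the gluing curve, the count of admissible concatenations depends only on local compatibility data at each crossing, data which is preserved by the bijection above; this yields $C_1(\gamma) = C_2(\gamma)$. I expect the hardest part of the argument to be the case analysis at each crossing, since $8$ sheets now meet along the gluing curve (versus $4$ in \Cref{nonhomeoproof}), producing substantially more possible local transitions between sheets and a correspondingly finer case split analogous to Cases 1--3 in \Cref{nonhomeoproof}. However, the order-$4$ rotational symmetry of Buser's construction guarantees that the number of admissible transitions is the same on both sides, completing the proof.
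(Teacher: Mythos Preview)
Your overall strategy matches the paper's: Lafont's chamber criterion for non-homeomorphism, and the ``count identical copies'' framework of \Cref{nonhomeoproof} for iso-length-spectrality, with four translates per $\beta_i$ in $X_1$ and four transplants in $X_2$. (Your chamber counts are swapped relative to the paper's proof of \Cref{prop:nonhomeo}; either way the conclusion holds.)

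There is, however, a genuine error in your transplant step. You initiate each transplant with $\delta(0)=0$ and observe (correctly, by \Cref{lemma:delta}) that $\delta$ then stays in $\{0,4\}$. But this is the wrong initiation. Recall from \Cref{Buser} that the systoles of $S_1$ sit between $B_i$ and $B_{i+1}$ for $i\in\{1,3,5,7\}$, while those of $S_2$ sit between $B'_i$ and $B'_{i+1}$ for $i\in\{2,4,6,8\}$: they are shifted by one building block. So if $\beta_i$ starts on the right $c$-edge of an odd block $B_n$ in $S_1$ (a point on the gluing curve of $X_1$), your transplant with $\delta(0)=0$ starts on the right $c$-edge of $B'_n$ in $S_2$, which is \emph{not} a systole of $S_2$ and hence not on the gluing curve of $X_2$. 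The paper fixes this by taking $\delta^j_i(0)=1$; then \Cref{lemma:delta} keeps $\delta$ in $\{1,-1\}$, so the transplant's endpoint block always differs in index-parity from the original's by one, landing it on a systole of $S_2$.

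Two smaller points. First, \Cref{lemma:delta} governs the transplant difference $\delta = n'-n$ between $S_1$ and $S_2$, not the translate difference $\epsilon$ within $S_1$; you cannot invoke it to control your offsets $\epsilon_0\in\{0,2,4,6\}$. The paper instead argues directly that parity of $d^j_i(k)=n^j_{i,k}-n^1_{i,k}$ is preserved under $a$-, $b$-, and $c$-crossings, which is what you actually need (and is weaker than your claimed invariance of $\{0,4\}$ and $\{2,6\}$). Second, your final paragraph appeals to an ``order-$4$ symmetry'' to equate the admissible-concatenation counts, but the paper carries out this count explicitly: $c_i\in\{3,4\}$ depending on whether $\beta_{i-1}$ ends and $\beta_i$ begins on the same (left/right) side, followed by the three-case analysis for $\beta_N$ exactly as in \Cref{nonhomeoproof} with $(1,2)$ replaced by $(3,4)$. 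That computation is where the equality $C_1(\gamma)=C_2(\gamma)$ is actually established; your symmetry remark does not substitute for it.
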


\begin{proof} \noindent \textbf{$X_1$ and $X_2$ are not homeomorphic.}
Cutting along the gluing curves yields one chamber for $X_2$ and two for $X_1$, so by \Cref{prop:homeoSA} it is impossible to establish a bijection between the two collections of chambers (see \Cref{fig:nonhomeo} and \Cref{fig:conncomps}). \\

\begin{figure}[h!]
    \centering
\includegraphics[width=0.7\textwidth]{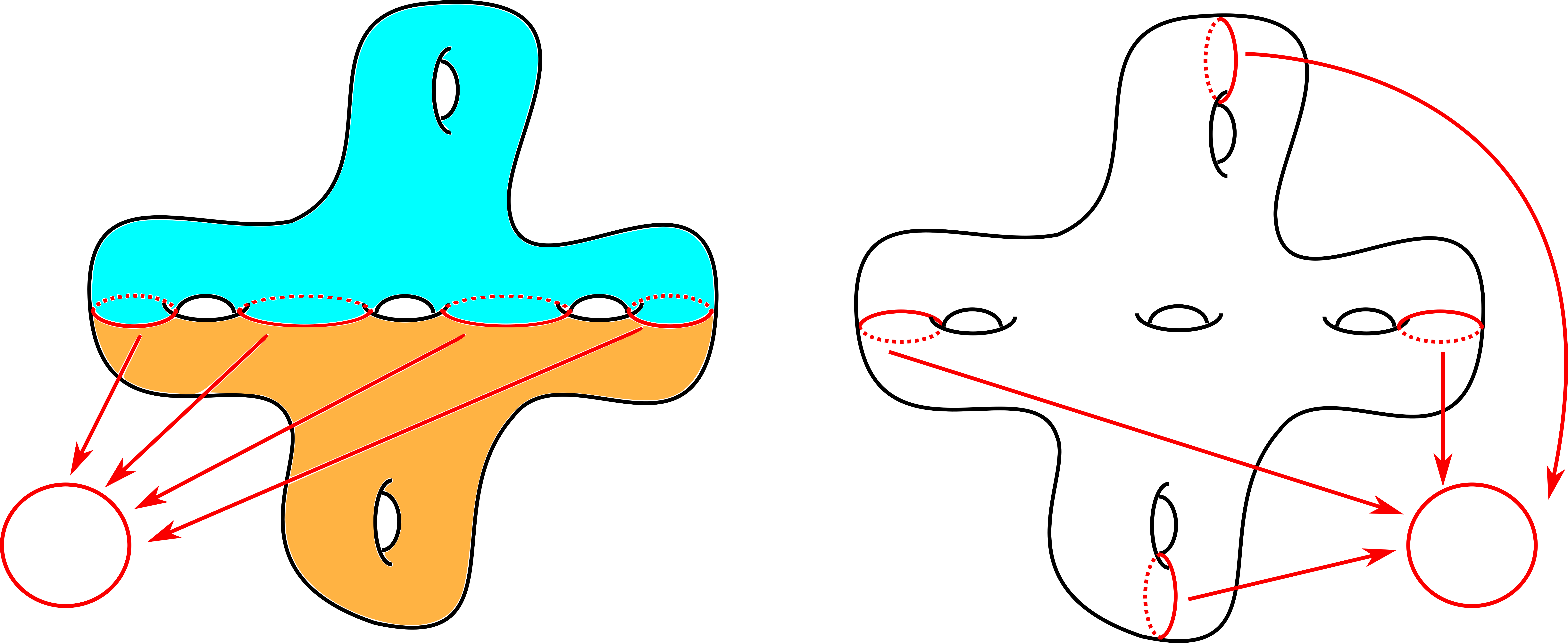}
    \caption{$X_1$ and $X_2$ are homeomorphic to the surface amalgams on the left and right respectively.}
    \label{fig:nonhomeo}
\end{figure}

\begin{figure}[h!]
    \centering
\centerline{\includegraphics[width=1.1\textwidth]{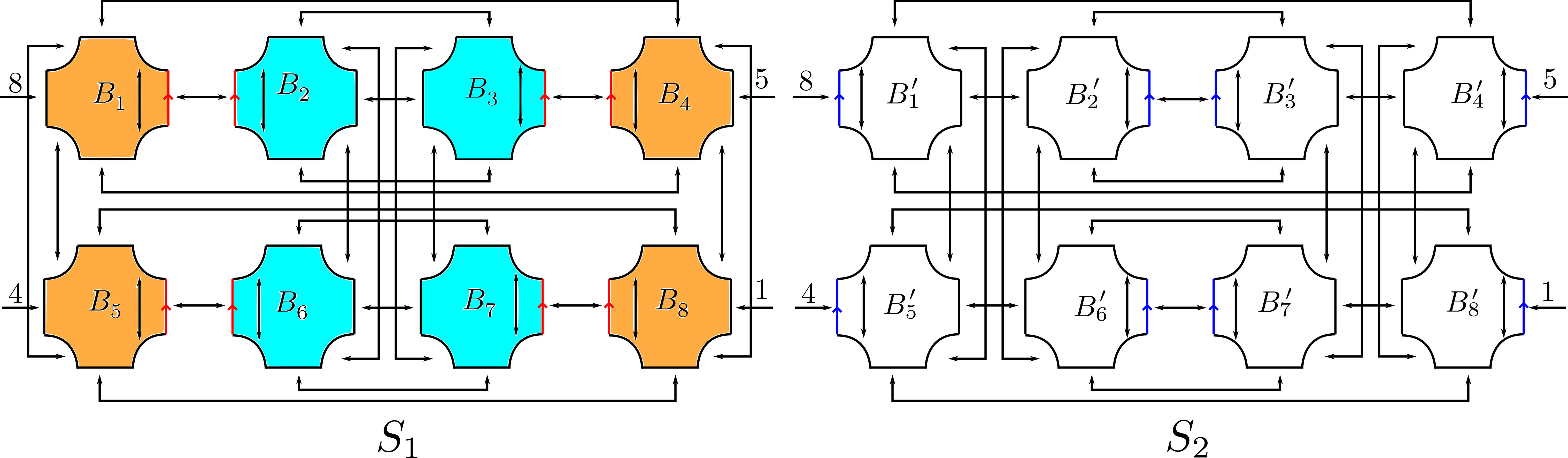}}
    \caption{The two chambers of $X_1$ are in orange and blue while $X_2$ only has one chamber.}
    \label{fig:conncomps}
\end{figure}

\noindent \textbf{$X_1$ and $X_2$ are iso-length-spectral.} We follow a similar strategy as before, by showing that the number of closed geodesics identical to a given closed geodesic $\gamma \subset X_1$ is the same in $X_1$ and $X_2$. Again, we focus our attention on closed geodesics that do not intersect the gluing curve. We decompose $\gamma$ into a union $\bigcup\limits_{i = 1}^N \beta_i$ of geodesic segments that project to connected geodesic segments in $S_1$ beginning and ending on the gluing curve and count the number of ways to concatenate admissible translates of each $\beta_i$.

We will show that in $X_1$, there are four translates of $\beta_i$ with the same beginning and end points on the gluing curve. If $\beta_i$ begins on the right (resp. left) edge of a building block, we show there is one copy starting on the right edge (resp. left) of each odd (resp. even) labeled building block. In particular, we claim that each translated copy of $\beta_i$ will end on a building block with the same parity as the one $\beta_i$ ends on (and thus share an endpoint with $\beta_i$). 

Let $\{\beta^j_i\}_{j = 1}^4$ be the set of geodesic segments in $X_1$ which are translated copies of $\beta_i$ that either all begin on the left sides of the even labeled building blocks or all begin on the right sides of the odd labeled building blocks, depending on where $\beta_i$ begins. Suppose $\beta_i^1 := \beta_i$. We check that $\beta_i^j$, where $j = 2, 3,$ or $4$, ends on a building block whose label has the same parity as the one $\beta_i^1$ ends on, allowing it to be concatenated with the next geodesic segment. Let $\beta_i^j = \bigcup\limits_{k = 0}^{L} \gamma^j_{i, k}$, where each $\gamma^j_k$ is contained in a single building block, $B_{n^j_k}$. Define $d^j_i(k) := n_{i, k}^j - n_{i, k}^1$. Note that $d^j_i(0)$ is even since each $\beta_i^j$ must start on either an odd or even labeled building block. Crossing a side of length $c$ will not change $d^j_i(k)$. Furthermore, crossing a side of length $b$ (resp. $a$) will change (resp. not change) the parity of both $n_{i, k}^j$ and $n_{i, k}^1$, so the parity of $d^j_i(k)$ remains the same for all $0 \leq k \leq L$. In particular, $d^j_i(L)$ is even, so the parity of $n_{i, L}^j$ will match that of $n_{i, 1}^j$. 

We can now compute $C_1(\gamma)$ using the same strategy as before. Using the same argument from \Cref{subsection:homeo} (replacing 1 and 2 with 3 and 4 respectively), we deduce that for each $2 \leq i \leq N - 1$: 
\begin{align*} c_i &:= \#\{\beta^j_i \text{ which can be concatenated with some fixed copy of } \beta_{i - 1}\} \\&= \begin{cases}
    3 \text{ if $\beta_{i - 1}$ ends on a right (resp. left) edge, and $\beta_i$ begins on a right (resp. left) edge;}\\
    4 \text{ otherwise.}
\end{cases}
\end{align*}

We then calculate $c_N$ depending on whether the following facts are true: 

\begin{itemize} 
\item \textit{Fact 1:} $\beta_{N - 1}$ ends on a right edge and $\beta_N$ begins on a right edge (or the statement is true if we replace ``right" with ``left"); 
\item \textit{Fact 2:} $\beta_N$ ends on a right edge and $\beta_1$ begins on a right edge (or the statement is true if we replace ``right" with ``left").
\end{itemize} 

By the same proof as before but replacing $0, 1$ and $2$ with $2, 3$, and $4$ respectively, we have $C_1(\gamma) = 4\bigg(\prod\limits_{i = 2}^{N - 1} c_i\bigg)(4)$ for Case 1, $C_1(\gamma) = 4\bigg(\prod\limits_{i = 2}^{N - 1} c_i\bigg)(3)$ for Case 2, and $C_1(\gamma) = \dfrac{4}{16}(4)\bigg(\prod\limits_{i = 2}^{N - 1} c_i\bigg)(3) + \dfrac{12}{16}(4)\bigg(\prod\limits_{i = 2}^{N - 1} c_i\bigg)(2) = \bigg(\prod\limits_{i = 2}^{N - 1} c_i\bigg)(3) + 3\bigg(\prod\limits_{i = 2}^{N - 1} c_i\bigg)(2)$ for Case 3. The fractions from Case 3 come from the fact that there are now a total of 16 choices of pairs of geodesic segments $\beta_1^j$ and $\beta_{N - 1}^k$ where $j, k \in \{1, 2, 3, 4\}$. For exactly four of these pairs, there is exactly one inadmissible copy of $\beta_N$ which begins on the same building block as the end of $\beta_{N - 1}^k$ and ends on the same building block as the beginning of $\beta_1^j$. For the 12 other pairs, there are two inadmissible copies of $\beta_N$. 

We now show $C_2(\gamma) = C_1(\gamma)$. First, we claim there are four transplanted copies of $\beta_i$ that begin on the right (resp. left) side of each odd (resp. even) labeled building block and end at the same point. There is a natural isometry between gluing geodesics in $X_1$ and those in $X_2$; if a gluing geodesic $c_n$ is the right edge of $B_n$ and the left edge of $B_{n + 1}$, let $\varphi(c_n)$ be the right edge of $B_{n + 1}$ and the left edge of $B_{n + 2}$. If each $\beta_i^j$ begins at $x \in X_1$, we construct a set $\{{\beta'}_i^j\}_{j = 1}^4 = \bigg\{\bigcup\limits_{k = 0}^{L} {\gamma'}^j_{i, k} \bigg\}_{j = 1}^4$ of transplanted copies of $\beta_i^j$ which begin at $\varphi(x) \in X_2$ and pass through building blocks $B'_{{n'}_{i, k}^j}$. Initiate each ${\beta'}_{i, k}^j$ with the rule $\delta^j_{i}(0) := {n'}_{i, 0}^j - n_{i, 0}^j = 1$. By \Cref{lemma:delta}, if $\delta^j_i(k) = \pm 1$, crossing sides of length $a$ or $c$ does not affect $\delta^j_i(k)$ while crossing a side of length $b$ sends $\pm 1$ to $\mp 1$. Regardless of the sequences of edges crossed, ${\delta'}^j_i(L) = 1$ or $-1$ for all $j$. Thus, each ${\beta'}_i^j$ will end on the same side of a gluing geodesic that each $\beta_i^j$ ends on. Each ${\beta'}_i^j$ will also end on a distinct building block. 

Thus, as before, one can make the same computations for $C_2(\gamma)$ that counts the number of identical copies of $\gamma$ in $X_2$. We obtain in the end that $C_1(\gamma) = C_2(\gamma)$. As a result, one can bijectively map copies of $\gamma$ in $X_1$ to copies of $\gamma$ in $X_2$. 
\end{proof}

\begin{remark} We remark that $X_1$ and $X_2$ are commensurable. Indeed, they have a common double cover. The chambers in $X_1$, which are both tori with four boundary components, lift to their double covers, which are tori with 8 boundary components, $S_{1, 8}$. In other words, the chambers in $X_1$ each lift to a copy of the chamber in $X_2$. In summary, $X_1$ and $X_2$ both have double covers consisting of two copies of $S_{1, 8}$ and two gluing curves obtained from identifying two quartets of lifts of boundary components originally identified in $X_1$ and $X_2$. 
\end{remark}

\section{Proof of \Cref{theorem:commensurable}}

We now prove \Cref{theorem:commensurable}, our second main result following the construction below. 

\begin{construction} Consider, again, the surfaces from \cite{buser1}. We will glue three copies of each surface together to create $X_1$ and $X_2$. For both $X_1$ and $X_2$, there will be one gluing curve which will consist of unions of perpendiculars between edges of length $b$ that bisect all the building blocks except ones labeled $6$ and $7$. 

Note that in each surface, three pairs of perpendiculars form geodesics: the ones in building blocks with labels $1$ and $4$, $2$ and $3$, and $5$ and $8$. As a result, we need to specify how the geodesics are glued together. We will identify the halves of geodesics on the odd numbered building blocks and halves of geodesics on even numbered building blocks. See \Cref{fig:not1-1} for the orientations of the gluing curves.
\end{construction} 

\noindent \textbf{$X_1$ and $X_2$ are weak length isospectral.} We use Buser's transplantation technique. As before, it suffices to consider closed geodesics that are not closed geodesics in copies of $S_1$ and $S_2$. We decompose $\gamma \subset X_1$ into a union of geodesic segments $\bigcup\limits_{j = 1}^N \gamma_j$ each of which is contained entirely within a building block $B_{n_j}$. We must specify an algorithm to construct a transplanted copy of $\gamma$, $\gamma' \subset X_2$, which is a union $\bigcup\limits_{k = 1}^N \gamma'_k$ of geodesic segments that are each completely contained within a building block in $S_2$, start and end on the gluing curve in $X_2$, and do not backtrack. We will label the copies of $S_1$ in $X_1$ with $S_1^1$, $S_1^2$, and $S_1^3$ and similarly label the copies of $S_2$ in $X_2$. In the following algorithm, we will write $\delta^s(j) = {n'}^s_j - n^s_j$, which gives instructions for transplanting geodesic segments from $S_1^s$ to $S_2^s$. 

\begin{algorithm} 
\label{algorithm:noncommensurable}
Suppose $\gamma_{j - 1} \cup \gamma_j$ does not project to a connected geodesic segment in any copy of $S_1$. Moreover, suppose that following \Cref{algorithm:originit}, $\beta_{j, j + L} := \bigcup\limits_{i = 0}^L \gamma_{j + i}$ projects to a continuous geodesic segment in some copy of $S_1$, $S_1^s$, where $1 \leq s \leq 3$. We then initiate $\gamma'_j$ using the following rules (unless otherwise specified, $\gamma'_j$ will be initiated on $S_2^s$): 
\begin{enumerate}
%\item \textit{Case 1: $\# a$ is even}: Initiate $\gamma'_j$ on $S_2^s$ using $\delta^s(j) = 0$; 
%\textit{Case 2: $\# a$ is odd}: 
\item If $\gamma_j$ lies in a building block indexed by an element in the set $\{1, 4, 5, 8\}$, we set $\delta^s(j) = 0$ if $\# b$ is even and $4$ if $\# b$ is odd. 
\item If $\gamma_{j + L}$ lies in a building block indexed by an element in the set $\{1, 4, 5, 8\}$, we set $\delta^s(j) = 0$. 
\item Otherwise, set $\delta^s(j) = 2$. 

%Otherwise, set $\delta^s(j) = 1$, unless $\gamma_j$ is in $B_5$, ($\# b$ is even and $n_{j + L} = 5$), or ($\# b$ is odd and $n_{j + L} = 8$). For the remaining three cases, set $\delta^s(j) = 4$ if $n_{j + L} = 5$ and $\delta^s(j) = 0$ otherwise.
\item If any of the previous rules cause backtracking, copy $\gamma'_j$ over to another copy of $B'_{n_j}$ (e.g. ${B'}^t_{n_j} \subset S_2^t$ where $s \neq t$). 
\end{enumerate} 
\end{algorithm} 

\begin{remark}
    Three surfaces are needed in order to ensure (4) from \Cref{algorithm:noncommensurable} can always be applied in order to prevent backtracking. In fact, only two surfaces are needed to correct for backtracking when constructing $\gamma'_j$, where $j < N$, but for $j = N$, one needs to ensure that $\gamma'_N$ is admissible with respect to both $\gamma'_1$ and $\gamma'_{N - 1}$. 
\end{remark}

We now show that following (1), (2) and (3) from \Cref{algorithm:noncommensurable}, for each $\beta_{j, j + L} \subset S_1^s$, we can obtain a geodesic segment $\beta'_{j, j + L} \in S_2^s$ which begins and ends on the gluing curve. If $\gamma_{j} \subset B_{n_j}$ for some $n_j \in \{1, 4, 5, 8\}$, then we set $\delta^s(j) = 0$ if $\# b$ is even and $\delta^s(j) = 4$ if $\# b$ is odd so that regardless, $\gamma'_j \subset B'_{n'_j}$, where $n'_j = k + 4 \in \{1, 4, 5, 8\}$. By \Cref{lemma:delta}, if $\delta^s = 0$ or $4$, crossing a side of length $a$ or $c$ leaves $\delta^s$ invariant, while crossing a side of length $b$ replaces $\delta^s$ by $\delta + 4$. Thus, $\delta^s(j + L) = 0$, so $B^s_{{n'}_{(j + L)}}$ intersects a gluing curve since $B^s_{n_{(j + L)}}$ does. Suppose $\gamma_{j + L} \subset B_k$ where $k \in \{1, 4, 5, 8\}$. Then setting $\delta^s(j) = 0$ ensures that $\gamma'_j$ begins on a building block intersecting a gluing curve and that $\delta^s(j + L)$ is $0$ or $4$. Thus, $\gamma'_{j + L}$ lies in a building block indexed by an element of the set $\{1, 4, 5, 8\}$, which necessarily intersects the gluing curve. 

This leaves the case ($\gamma_j \subset B_{n_j}$, where $n_j \in \{2, 3\}$) \textit{and} ($\gamma_{j + L} \subset B_{n_{j + L}}$, where $n_{j + L} \in \{2, 3\}$). We set $\delta^s(j) = 2$. Since $n_j \in \{2, 3\}$, $n'_j \in \{1, 4, 5, 8\}$ and the building blocks indexed by this set all intersect the gluing curve. By \Cref{lemma:delta}, $\delta = \pm 2$, crossing a side of length $a$ or $b$ replaces $\delta$ by $\delta + 4$, while as before, crossing a side of length $c$ does not change $\delta$. Thus, if $\# a + \#b$ is even (resp. odd), then $\delta^s(j + L) = 2$ (resp. $-2$). Either way, since $n^s_{j + L} \in \{2, 3\}$, we have that ${n'}^s_{j + L} \in \{1, 4, 5, 8\}$, and the building blocks indexed by this set all intersect the gluing curve. 

We need to check an additional condition to ensure $\gamma'_j \cup \gamma'_{j - 1}$ is connected in $X_2$. It suffices to show if $\gamma'_{j - 1}$ ends in an odd (resp. even) numbered building block, $\gamma'_j$ also begins in an odd (resp. even) numbered building block. We know the previous sentence to be true if $\gamma'$ is replaced with $\gamma$. In all cases, $\delta^s(k)$ is even for any $j \leq k \leq j + L$. As a consequence, the parities of the building blocks containing $\gamma^s_j$ and ${\gamma'}^s_j$ are the same. This is enough to prove what we want.

\begin{figure}[h]
    \centering
    \centerline{\includegraphics[width=1.1\textwidth]{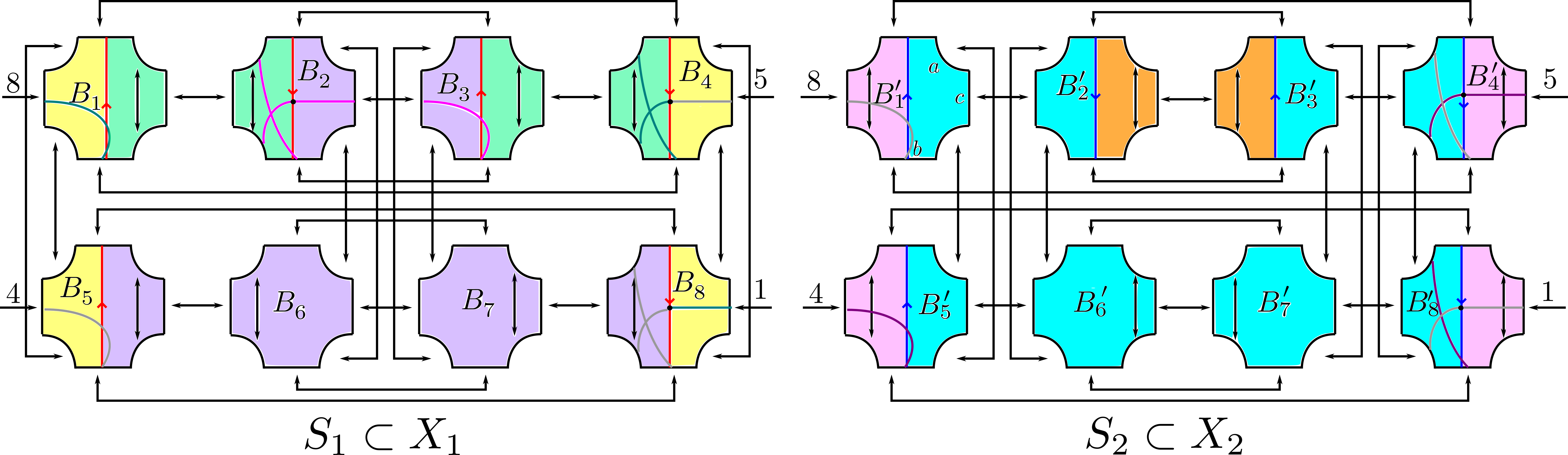}}
    \caption{Weak length isospectral but noncommensurable $X_1$ and $X_2$ with the chambers shown in different colors. The gluing curves in $X_1$ are shown in red, while the ones in $X_2$ are dark blue. The pink and green geodesics in $X_1$ both map to the purple geodesic in $X_2$, so \Cref{algorithm:noncommensurable} does not give a 1-1 correspondence between transplanted closed geodesics.}
    \label{fig:not1-1}
\end{figure}

\begin{remark}
    Unfortunately, \Cref{algorithm:noncommensurable} does not yield a 1-1 correspondence between identical geodesic segments in $X_1$ and $X_2$, even if we restrict to single subsets of $X_1$ and $X_2$ consisting of a single copy of $S_1$ or $S_2$ with geodesics identified. \Cref{fig:not1-1} illustrates this. Following \Cref{algorithm:noncommensurable}, both the pink and green curve in $X_1$ are assigned to the purple curve in $X_2$. Thus, one cannot show iso-length-spectrality using transplantation via \Cref{algorithm:noncommensurable}. In the picture, we also show all the identical closed geodesics that begin at the same point (indicated by black dots) in $X_1$ and $X_2$ respectively. Note that there are three closed geodesics in $X_1$ and only two in $X_2$, as initiating a geodesic from $B'_2$ does not result in a closed geodesic in $X_2$.
\end{remark}

 One may ask whether we can come up with initiation rules so that there is indeed a 1-1 correspondence between identical closed geodesics. Unfortunately, the answer is no. We show this by counting identical copies of a particular closed geodesic in $X_1$ and $X_2$.  

 We revisit the closed geodesics in \Cref{fig:not1-1}. In $X_1$, notice that on each surface $S_1^s$, there are three identical closed geodesics (depicted in green, pink, and gray) that begin and end at the same point $x \in X_1$, which is depicted as a black dot. In contrast, on each surface $S_2^s$ in $X_2$, there are only two closed geodesics beginning and ending at the corresponding point $x \in X_2$. In $S_1^2 \subset X_1$, there are also two geodesics beginning and ending in the odd indexed building blocks, on the half of the gluing $x$ is not on. In $S_2^s \subset X_2$, there are also two such closed geodesics; see \Cref{fig:not1-12}. Thus, in total, since there are $3$ copies of each surface, there are $5(3) = 15$ copies of identical closed geodesics in $X_1$ but only $4(3) = 12$ copies of the same closed geodesics in $X_2$. This shows it is not possible to achieve a 1-1 correspondence between identical closed geodesics in $X_1$ and $X_2$; thus, one cannot prove iso-length-spectrality via transplantation. We remark that this does not rule out iso-length-spectrality of $X_1$ and $X_2$, which is improbable but still possible.\\

\begin{figure}[H]    \centerline{\includegraphics[width=1.1\textwidth]{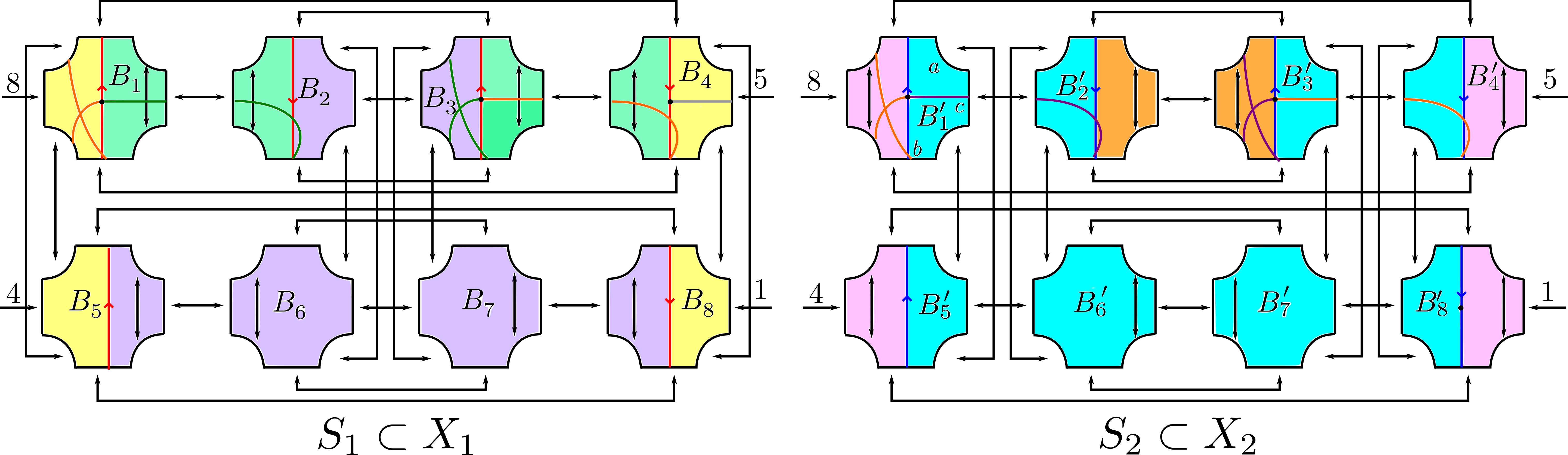}}
    \caption{Copies of closed geodesics identical to the ones shown in \Cref{fig:not1-1} which begin and end in the odd-labeled building blocks.}
    \label{fig:not1-12}
\end{figure}

\begin{figure}[h!]
    \centering
    \includegraphics[width=0.8\textwidth]{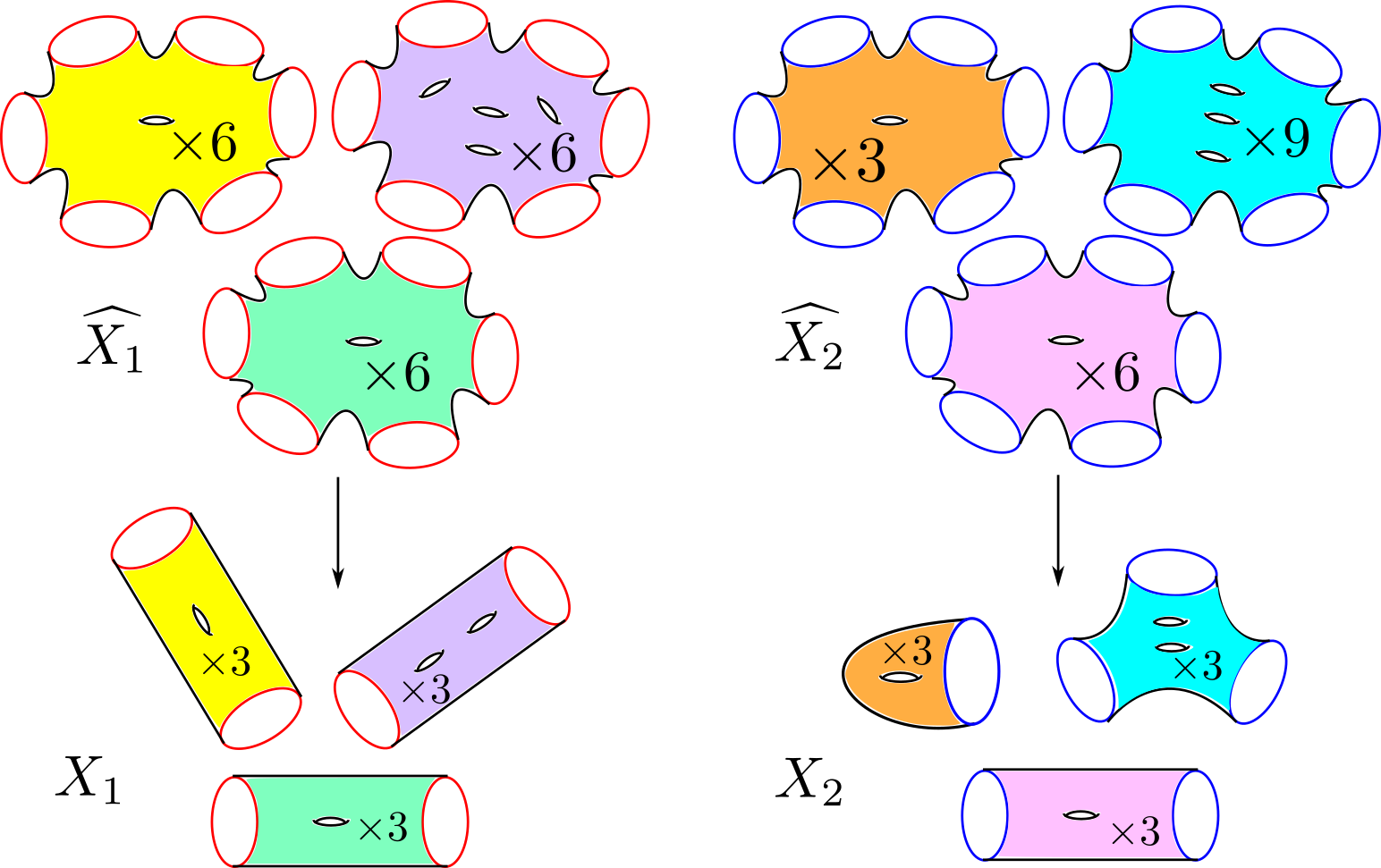}
    \caption{$X_1$ and $X_2$, which are constructed by gluing together the red and blue sets of boundary components, are not commensurable. Their six-sheeted covers referenced in the proof of \Cref{theorem:commensurable} are also shown.}
    \label{fig:noncomm}
\end{figure}

\noindent 
\textbf{$X_1$ and $X_2$ are not commensurable.} We now show that $X_1$ and $X_2$ do not have a common finite-sheeted cover. Note that $X_1$ has six tori with two boundary components, which we denote by $S_{1, 2}$, and three genus two surfaces with two boundary components $S_{2, 2}$; again, all the boundary components are identified. On the other hand, $X_2$ consists of three tori with one boundary component $S_{1, 1}$, three tori with two boundary components $S_{1, 2}$, and three genus two surfaces with three boundary components $S_{2, 3}$, and all the boundary components are identified together. 

Consider a six-sheeted cover of $X_1$, $\widehat{X_1}$, which consists of six copies of a genus four surface with six boundary components (three-sheeted covers of $S_{2, 2}$) and 12 copies of a torus with six boundary components (three-sheeted covers of $S_{1, 2}$). Consider also six-sheeted cover of $X_2$, $\widehat{X_2}$, which consists of nine copies of genus three surfaces with six boundary components (double covers of $S_{2, 3}$) and nine copies of a torus with six boundary components (six three-sheeted covers of $S_{1, 2}$ and three six-sheeted covers of $S_{1, 1}$) (see \Cref{fig:noncomm}). For both $\widehat{X_1}$ and $\widehat{X_2}$, there are six gluing curves, and exactly one boundary component in each chamber is glued to each gluing curve. Note that the Euler characteristics of the chambers in $\widehat{X_1}$, in ascending order, are $\{\underbrace{-12,...,-12}_{\times 6}, \underbrace{-6, ..., -6}_{\times 12}\}$ while those in $\widehat{X_2}$ are $\{\underbrace{-10, ..., -10}_{\times 9}, \underbrace{-6, ..., -6}_{\times 9}\}$. Let $\{S_i\}$ and $\{T_i\}$ denote the collections of chambers of $\widehat{X_1}$ and $\widehat{X_2}$ respectively, labeled so that $\chi(S_1) \leq \chi(S_2) \leq... \leq \chi(S_{18})$ and $\chi(T_1) \leq \chi(T_2) \leq... \leq \chi(T_{18})$. A generalization of Proposition 3.3.2 of \cite{stark}, proved in Section 5.2 of \cite{DST}, implies that $\pi_1(\widehat{X_1})$ and $\pi_1(\widehat{X_2})$ are abstractly commensurable (e.g. they do not have isomorphic finite-index subgroups) if and only if all the $\dfrac{\chi(S_i)}{\chi(T_i)}$ are equal. Note, however, that $\dfrac{\chi(S_1)}{\chi(T_1)} = \dfrac{-12}{-10} \neq \dfrac{-6}{-6} = \dfrac{\chi(S_{18})}{\chi(T_{18})}$. It then follows that $\pi_1(\widehat{X_1})$ and $\pi_1(\widehat{X_2})$ are not abstractly commensurable. 

Since abstract commensurability is an equivalence relation, $\pi_1(X_1)$ and $\pi_1(X_2)$ also cannot be abstractly commensurable since $\pi_1(X_i)$ is abstractly commensurable to $\pi_1(\widehat{X_i})$ for $i = 1, 2$. By the Galois correspondence of covering spaces for CW complexes, it then follows that $X_1$ and $X_2$ also cannot be commensurable, as claimed. \qed

\bibliographystyle{amsalpha}
\bibliography{biblio}

\end{document}